\numberwithin{equation}{section}
\newtheorem{prop}{Proposition}[section]
\newtheorem{theo}[prop]{Theorem}
\newtheorem{lemm}[prop]{Lemma}
\newtheorem{rem}[prop]{Remark}
\newtheorem{defi}[prop]{Definition}
\newtheorem{conj}[prop]{Conjecture}
\def\begeq{\begin{equation}}
\def\endeq{\end{equation}}
\begin{document}

\title{K\"ahler-Einstein metrics and Ding functional on $\mathbb Q$-Fano group compactifications.}
\author{Yan Li$^{*}$ ZhenYe Li$^{\dag}$}

\address{$^{*}$School of Mathematics and Statistics, Beijing Institute of Technology, Beijing, 100081, China.}
\address{$^{\dag}$College of Mathematics and Physics, Beijing University of Chemical Technology,
Beijing, 100029, China.}
\email{liyan.kitai@yandex.ru\,\,lizhenye@pku.edu.cn}

\thanks {$^*$Partially supported by China Post-Doctoral Grant BX20180010 and Beijing Institute of Technology Research Fund Program for Young Scholars, No. 3170012222012.}
\subjclass[2000]{Primary: 53C25; Secondary:
32Q20, 58D25, 14L10}

\keywords{K\"ahler-Einstein metrics, $\mathbb Q$-Fano compactifications of Lie groups, moment polytopes, reduced Ding functional}

\begin{abstract}
Let $G$ be a complex, connect reductive Lie group which is the complexification of a compact Lie group $K$. Let $M$ be a $\mathbb Q$-Fano $G$-compactification. In this paper, we first prove the uniqueness of $K\times K$-invariant (singular) K\"ahler-Einstein metric. Then we show the existence of (singular) K\"ahler-Einstein metric implies properness of the reduced Ding functional. Finally, we show that the barycenter condition is also necessary of properness.
\end{abstract}
\maketitle

\section{Introduction}
Let $G$ be an $n$-dimensional connect, complex reductive group which is the complexification of a compact Lie group $K$, with complex structure $J_G$.
A projective normal variety $M$ is called a {\it (bi-equivariant) compactification of $G$} (or \emph{$G$-compactification} for simplicity) if it admits a holomorphic $G\times G$-action with an open and dense orbit isomorphic to $G$ as a $G\times G$-homogeneous space (cf. \cite{AB1, AB2, Timashev-Sbo}). If there is in addition a $G\times G$-linearized ample ($\mathbb Q$-Cartier) line bundle $L$ on $M$, then $(M, L)$ is called a {\it polarized compactification} of $G$ (cf. \cite[Section 2.1]{AK}). In particular, when $K^{-1}_M$ is an ample $\mathbb Q$-Cartier line bundle, we call $M$ a \emph{$\mathbb Q$-Fano $G$-compactification}. We refer the reader to \cite{Timashev-Sbo, AK, Del2, Del3}, etc. for further knowledge.

Fix a maximal complex torus $T^\mathbb C$ of $C$. Up to $G\times G$-equivariant isomorphisms, polarized $G$-compactifications are in one-one correspondence with its associated polytopes (see Section 2.1 below for detail) $P$ (cf. \cite[Theorem 2.4]{AK}), which lies in $J_g\mathfrak t^*$. Let $\Phi$ be the root system of $(G,T^\mathbb C)$ and $\Phi_+\subset\Phi$ be a chosen set of positive roots. We defined $P_+$ to be the intersection of $P$ with the positive Weyl chamber defined by $\Phi_+$. In the following we take $P$ to be the polytope of $(M,K^{-1}_M)$. In terms of a weighted barycenter $\mathbf{b}(2P_+)$ of twice the polytope $P_+$ (cf. \eqref{bar-def} below), Li-Tian-Zhu in \cite{LTZ2} proved the following criterion of existence of (singular) K\"ahler-Einstein metric on a $\mathbb Q$-Fano group compactification:
\begin{theo}\label{sg-bary-thm} Let $M$ be a $\mathbb Q$-Fano $G$-compactification whose associated polytope satisfies the fine condition.\footnote{By ``fine" we mean that each vertex of $P$ is the intersection of precisely $r=\text{rank}(G)$ facets.} Then $M$ admits a K\"ahler-Einstein metric if and only if
\begin{align}\label{0109}
\mathbf{b}(2P_+)\,\in\, 4\rho+\Xi,
\end{align}
where $2\rho=\sum_{\alpha\in\Phi_+}\alpha$
and
$\Xi$ is the relative interior of the cone generated by $\Phi_+$.
\end{theo}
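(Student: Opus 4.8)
The plan is to reduce the existence of a $K\times K$-invariant (singular) K\"ahler--Einstein metric on $M$ to a real Monge--Amp\`ere equation on the open torus orbit, then to a variational problem for a convex functional on the polytope, and finally to read off the barycenter condition \eqref{0109} as the precise criterion for that functional to attain its minimum. First I would recall the $KAK$-type integration formula: a $K\times K$-invariant K\"ahler potential $\varphi$ on the open orbit $G$ descends to a $W$-invariant convex function $\psi$ on $J_{\mathfrak g}\mathfrak t$, and, after writing $\psi$ in terms of its Legendre-type dual supported on $2P_+$, the complex Monge--Amp\`ere equation $(\omega_\varphi)^n = e^{-\varphi}\,\omega_0^n$ becomes a real Monge--Amp\`ere equation for $\psi$, with the Haar measure on $G$ contributing the Jacobian factor $\mathbf J(x)=\prod_{\alpha\in\Phi_+}\sinh^2\!\langle\alpha,x\rangle$ (up to normalization). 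This step is classical for toric manifolds (Wang--Zhu) and for group compactifications was carried out in \cite{Del2, Del3} and \cite{LTZ2} in the smooth/Fano setting; in the $\mathbb Q$-Fano case one works with the normalization of $M$ and controls the behaviour near the singular locus, which is encoded combinatorially by the fine condition on the vertices of $P$.

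Next I would pass to the variational picture. Via the Ding functional, existence of the K\"ahler--Einstein metric is equivalent to properness (or at least to the attainment of the infimum) of the reduced Ding functional $\cD$ on the space of $K\times K$-invariant potentials, and by the integration formula $\cD$ descends to a convex functional on the convex functions on $2P_+$. Its linear part is governed by the weighted barycenter: one shows that the directional derivative of $\cD$ along an affine function $\ell$ on $2P_+$ is, up to a positive constant, $\langle \mathbf b(2P_+) - 4\rho,\,\nabla\ell\rangle$ — here the $4\rho$ shift comes from the factor $\mathbf J$, whose logarithmic derivative contributes $\sum_{\alpha\in\Phi_+}\alpha = 2\rho$ doubled by the doubling of the polytope. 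Coercivity of $\cD$ in the non-affine directions is automatic from strict convexity of the Monge--Amp\`ere energy part; hence $\cD$ is proper, and the minimizer exists, \emph{if and only if} the linear functional $\ell\mapsto\langle \mathbf b(2P_+)-4\rho,\nabla\ell\rangle$ is strictly positive on the (closure of the) recession cone of admissible normalized convex functions, which after a Legendre transform is exactly the dual cone to $\Xi$. Chasing the duality, this strict positivity is equivalent to $\mathbf b(2P_+)\in 4\rho+\Xi$.

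For the converse direction — assuming \eqref{0109}, produce the metric — I would run the continuity method (or Aubin--Yau type estimates) for the real Monge--Amp\`ere equation on $2P_+$: the barycenter condition gives a uniform lower bound for $\cD$ along all normalized rays, hence an a priori $C^0$-estimate for $\psi$, and then the standard interior estimates (Caffarelli--Guti\'errez, or the Legendre-dual Pogorelov estimates as in Wang--Zhu) yield higher regularity on the interior of $2P_+$; the weak/singular solution so obtained defines a singular K\"ahler--Einstein metric on $M$ with bounded potential, which is what is meant by a K\"ahler--Einstein metric on a $\mathbb Q$-Fano variety. The main obstacle I anticipate is precisely the interface between the combinatorics and the analysis at the boundary $\partial(2P_+)$ and at the singular locus of $M$: one must verify that the fine condition is exactly what guarantees that the boundary behaviour of $\psi$ forced by the polarization $K_M^{-1}$ is compatible with the real Monge--Amp\`ere solution having bounded (hence admissible) potential, and that the barycenter integral $\mathbf b(2P_+)$ is finite and the dual-cone identification is correct — in particular that no extra boundary contributions spoil the derivative computation of $\cD$. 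Handling the $W$-invariance and the positive-Weyl-chamber reduction carefully (the factor $\mathbf J$ vanishes on the walls, so the relevant function spaces must be chosen so that this is harmless) is the other delicate point.
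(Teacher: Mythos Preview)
Your outline captures the reduction to a real Monge--Amp\`ere problem and the variational framework correctly, and for the sufficiency direction your sketch (continuity method with the barycenter condition supplying the $C^0$-estimate) is essentially Delcroix's original argument in \cite{Del2}; the paper instead cites the variational approach of \cite{LTZ2}, where \eqref{0109} is shown to imply properness of $\mathcal D$ on $\mathcal E^1_{K\times K}(2P)$ and a minimizer is extracted by compactness. Either route works.

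The necessity direction, however, has a genuine gap. You write that ``existence of the K\"ahler--Einstein metric is equivalent to properness (or at least to the attainment of the infimum) of the reduced Ding functional'' and then argue directly on the linear part of $\mathcal D$. But the implication ``K\"ahler--Einstein exists $\Rightarrow$ $\mathcal D$ is proper'' is precisely the nontrivial content of the paper (Theorem \ref{thm-1.2}(1)), and it is \emph{not} automatic. Attainment of the infimum alone only gives a lower bound for $\mathcal D$, which via Proposition \ref{stab-bary}(1) yields the closed condition $\mathbf b(2P_+)\in 4\rho+\overline\Xi$, not the open condition \eqref{0109}. To upgrade boundedness to properness the paper first proves a uniqueness theorem for $K\times K$-invariant singular K\"ahler--Einstein metrics modulo $Z(G)$ (Theorem \ref{uniqueness-T}, which in turn rests on the algebraic Lemma \ref{centralizer} and Proposition \ref{max-tor-unique} about the maximal torus in ${\rm Aut}^0(M)$), and then runs a Darvas--Rubinstein style contradiction argument: if $\mathcal D$ were not proper one could rescale a minimizing sequence to produce a second K\"ahler--Einstein potential at distance $1$ from the given one, violating uniqueness after normalization. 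Your proposal contains no mechanism for this step.

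A second, smaller issue: the assertion that ``coercivity of $\mathcal D$ in the non-affine directions is automatic from strict convexity of the Monge--Amp\`ere energy part'' is not how the argument goes. The nonlinear term $\mathcal F$ is \emph{not} coercive along the relevant test rays; rather, the paper shows in Proposition \ref{stab-bary} that along the piecewise-linear rays $u_\lambda(y)=\lambda\max_{w\in W}\langle w(\varpi_1),y\rangle$ the term $\mathcal F(u_\lambda)$ stays \emph{bounded above} while $\mathcal L(u_\lambda)$ is linear in $\lambda$ with slope $\tfrac12|\alpha_1|^2 c_1$. Thus if some coefficient $c_i\le 0$ in the expansion $\mathbf b(2P_+)-4\rho=\sum c_i\alpha_i$, the whole functional $\mathcal D(u_\lambda)$ fails the properness inequality \eqref{0108}. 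The point is not coercivity off the affine directions but rather a careful upper bound on $\mathcal F$ along these specific $W$-invariant rays.
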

Theorem \ref{sg-bary-thm} was first proved by Delcroix \cite{Del2} for smooth Fano compactifications (see also \cite{LZZ} for general polarized cases). In \cite{LTZ2}, Li-Tian-Zhu proved that \eqref{0109} implies properness of Ding functional (modulo group action) on the $\mathcal E^1_{K\times K}(M,K_M^{-1})$-space introduced by \cite{BBEGZ}. Hence prove the sufficiency of \eqref{0109} by using the variation method. In fact, the author proved that when \eqref{0109} holds, the reduced Ding functional $\mathcal D(\cdot)$ is proper on a set of convex functions $\mathcal E^1_{K\times K}(2P)$ on $2P$, which consists the reduction of $\mathcal E^1_{K\times K}(M,K_M^{-1})$-space (see Section 2.2 for detail).

On the other hand, for the direction of necessity, they showed that \eqref{0109} is necessary for existence of (singular) K\"ahler-Einstein metric by checking $K$-stability, using the argument of \cite[Section 3]{AK}. It is unknown whether \eqref{0109} is also necessary for the reduced Ding functional being proper.

In this paper, we will prove that the existence of (singular) K\"ahler-Einstein metric implies properness of reduced Ding functional. And this forces $\mathbf{b}(2P_+)$ to satisfy \eqref{0109}. Namely,
\begin{theo}\label{thm-1.2}
Let $M$ be a $\mathbb Q$-Fano $G$-compactification. If $M$ admits a (singular) K\"ahler-Einstein metric. Then
\begin{itemize}
\item [(1)] The reduced Ding functional $\mathcal D(\cdot)$ is proper on $\mathcal E^1_{K\times K}(2P)$, i.e.
\begin{align}\label{0108}
\mathcal D(u)\geq c_0\int_{2P_+}u\pi\,dy-C_0,~\forall u\in\mathcal E^1_{K\times K}(2P)
\end{align}
holds for suitable constants $c_0,C_0>0$.
\item [(2)] The barycenter $\mathbf {b}(2P_+)$ satisfies \eqref{0109}. Consequently, $M$ is K-stable.
\end{itemize}
\end{theo}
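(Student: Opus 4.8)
The plan is to leverage the correspondence between $K\times K$-invariant metrics on $(M, K_M^{-1})$ and convex functions on the moment polytope, together with the known sufficiency direction from \cite{LTZ2}. For part (1), I would argue by the standard variational dichotomy: the existence of a (singular) K\"ahler-Einstein metric gives a critical point of the reduced Ding functional $\mathcal D$ on $\mathcal E^1_{K\times K}(2P)$, and one must upgrade this to properness. The first step is to set up the reduction: by the results recalled in Section 2.2, $\mathcal D$ descends to a convex functional on the space $\mathcal E^1_{K\times K}(2P)$ of normalized convex functions on $2P$, where convexity is with respect to the linear (geodesic) structure on this space. The key point is that $\mathcal D$ is \emph{convex along geodesics}, so a minimizer — which exists precisely because a K\"ahler-Einstein metric exists — together with strict convexity transverse to the automorphism directions yields a coercivity estimate. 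Concretely, I would normalize elements of $\mathcal E^1_{K\times K}(2P)$ by fixing the value at the barycenter (or by the $\sup$-normalization used in the paper), write the linear term $\int_{2P_+} u\,\pi\,dy$ as the natural $L^1$-type norm on the reduced space, and show $\mathcal D(u) \to +\infty$ at a definite linear rate along any ray emanating from the minimizer. The uniqueness result proved earlier in the paper (uniqueness of the $K\times K$-invariant K\"ahler-Einstein metric) is what guarantees the minimizer is essentially unique modulo the torus part of $\Aut(M)$, which is needed to get genuine properness rather than mere boundedness below.

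The main technical obstacle is the \emph{singular} case and the behavior near the boundary of the polytope. On the reduced side, the Ding functional has the shape $\mathcal D(u) = \mathcal L(u) - \log\!\left(\int_{2P} e^{-u}\,(\text{Jacobian factor})\right)$-type expression, where the nonlinear term involves integration against a measure that degenerates along the walls $2P_+\setminus 2P$; controlling this term for $u\in\mathcal E^1_{K\times K}(2P)$ of finite energy but possibly unbounded requires careful use of the Santal\'o-type / Legendre-transform estimates already developed in \cite{LTZ2} and \cite{Del2}. I expect that the cleanest route is: (i) reduce to rays $u_t = u_{KE} + t v$ with $v$ convex and normalized; (ii) compute $\frac{d}{dt}\mathcal D(u_t)\big|_{t=0^+}$, which by the Euler-Lagrange equation for the K\"ahler-Einstein metric equals a linear functional of $v$ whose positivity is exactly condition \eqref{0109}; (iii) use convexity in $t$ to propagate the positive derivative into the linear lower bound \eqref{0108}. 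The subtlety is that the directional derivative at $u_{KE}$ may vanish in the directions tangent to the (torus part of the) automorphism group, so one genuinely needs the second-order / uniform convexity input, which is where the uniqueness theorem and a compactness argument on the space of normalized directions enter.

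For part (2), once properness \eqref{0108} is established, I would extract \eqref{0109} by testing against a well-chosen family of linear (or affine) functions on $2P$. Since affine functions $u_\ell(y) = \langle \ell, y\rangle + c$ lie in $\mathcal E^1_{K\times K}(2P)$ after the appropriate symmetrization over the Weyl group (replacing $\ell$ by its Weyl-average restricted to the chamber, or taking max over the orbit to stay convex), evaluating the explicit formula for $\mathcal D$ on such functions reduces the inequality \eqref{0108} to a statement purely about the polytope: the linearization of the log-term produces precisely the barycenter $\mathbf b(2P_+)$ paired with $\ell$, while the linear term produces $\int_{2P_+}\langle\ell,y\rangle\pi\,dy$, and $c_0>0$ forces $\mathbf b(2P_+) - 4\rho$ to pair nonnegatively — in fact positively, by the strictness — with every $\ell$ in the dual of $\Xi$. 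This is exactly membership in $4\rho + \Xi$. The final assertion that $M$ is K-stable then follows immediately from Theorem \ref{sg-bary-thm} (or directly from \cite{Del2}): \eqref{0109} is equivalent to the barycenter criterion, which in \cite{LTZ2} was shown equivalent to K-stability via the $K$-stability computation of \cite[Section 3]{AK}. The one place to be careful in step (2) is ensuring the test functions actually belong to $\mathcal E^1_{K\times K}(2P)$ with the right normalization and finite energy, so that \eqref{0108} genuinely applies to them; affine functions are the safest choice and suffice to sweep out all of $\Xi^\vee$.
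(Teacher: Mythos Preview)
Your overall architecture matches the paper's: for (1), a Darvas--Rubinstein contradiction argument using convexity of $\mathcal D$ along linear paths in $\mathcal E^1_{K\times K}(2P)$, compactness/lower semicontinuity, and the uniqueness theorem; for (2), testing properness against Weyl-symmetrized linear rays (the paper uses exactly your ``max over the orbit'', namely $u_\lambda(y)=\lambda\max_{w\in W}\langle w(\varpi_i),y\rangle$).

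However, your steps (ii)--(iii) in Part (1) contain a real error. At the critical point $u_{KE}$ the first variation $\frac{d}{dt}\mathcal D(u_t)|_{t=0^+}$ vanishes in \emph{every} direction $v$, not only along the center --- that is precisely what the Euler--Lagrange equation says. So there is no ``linear functional whose positivity is \eqref{0109}'' to read off, and no positive derivative to propagate via convexity. The argument that actually works is the one you only gesture at in the final ``subtlety'' sentence, and it does not use any second-order information: assume $\inf\{(\mathcal D(u)-\mathcal D(u_0))/d(u_0,u):d(u_0,u)\ge 1\}=0$, pick a realizing sequence, rescale along linear paths to unit distance (convexity controls the rescaled values), extract a limit $\hat u_0$ by semicontinuity; then $\hat u_0$ is another minimizer, hence another K\"ahler--Einstein potential, and Theorem~\ref{uniqueness-T} forces $\hat u_0(y)=u_0(y)-\langle Z_\sigma,y\rangle$ for some $Z_\sigma\in\mathfrak z(\mathfrak g)$. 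The normalization $\inf u=u(O)=0$ built into $\mathcal E^1_{K\times K}(2P)$ then kills $Z_\sigma$, contradicting $d(\hat u_0,u_0)=1$.

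For Part (2) your sketch is essentially right, but two points need correction. First, the bookkeeping is inverted: it is the \emph{linear} part $\mathcal L(u_\lambda)=\frac1V\int_{2P_+}u_\lambda\pi\,dy-u_\lambda(4\rho)$ that evaluates to $\lambda\langle\varpi_i,\mathbf b(2P_+)-4\rho\rangle$ and carries the barycenter information, while the log-term $\mathcal F(u_\lambda)$ (which involves $e^{-\psi_{u_\lambda}}$ with $\psi_{u_\lambda}$ the Legendre transform, not $e^{-u_\lambda}$) is what must be shown to remain bounded as $\lambda\to\infty$; this boundedness is the nontrivial computation in the paper's Proposition~\ref{stab-bary}. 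Second, that proposition assumes $\mathbf b(2P_+)\in\mathfrak a_{ss}$ (vanishing Futaki invariant), which you must first deduce from the existence of the K\"ahler--Einstein metric; the paper does this in Remark~\ref{rmk-4.2}.
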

As a consequence, we get a direct proof of necessity of \eqref{0109}.

It is also worth mentioning that for a general smooth Fano manifold, Zhu \cite{Zhu-survey} proposed the following conjecture (see also \cite[Conjecture 7.7]{LZ-Sasaki}):
\begin{conj}\label{zhu-conj}\cite[Conjecture 4.9]{Zhu-survey}
Let $(M,g)$ be an $n$-dimensional K\"ahler-Einstein manifold with $\omega_g\in2\pi c_1(M)$. Denote by $\mathfrak K$ the maximal compact subgroup of the automorphism group $\text{Aut}(M)$. Suppose that $\omega_g$ is $\mathfrak K$-invariant. Then there are constants $c_0,C_0>0$ such that
\begin{align}\label{zhu-conj-eq}
\mathcal D(u)\geq c_0\inf_{\sigma\in Z(\text{Aut}(M))}I(\phi_\sigma)-C_0,~\forall \text{$\mathfrak K$-invariant K\"ahler potential $\phi$.}
\end{align}
Here $I(\cdot)$ is the Aubin's $I$-functional and $Z(\text{Aut}(M))$ denotes the center of $\text{Aut}(M)$.
\end{conj}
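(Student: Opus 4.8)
The plan is to establish Conjecture \ref{zhu-conj} in the situation of the present paper, namely when $M$ is a $\mathbb Q$-Fano $G$-compactification carrying a (singular) K\"ahler-Einstein metric $\omega_g$, by pushing the coercivity of Theorem \ref{thm-1.2}(1) through the polytope reduction and absorbing the remaining flat directions into the infimum over the center. Since $K\times K\subset\mathfrak K$ and $\omega_g$ is $\mathfrak K$-invariant, the uniqueness statement of this paper together with averaging lets me restrict to $K\times K$-invariant K\"ahler potentials $\phi$; under the Legendre/moment-map correspondence these descend to convex functions $u\in\mathcal E^1_{K\times K}(2P)$ on $2P$, with $\mathcal D(\phi)=\mathcal D(u)$, while Aubin's functional $I(\phi)$ descends to a normalized polytope integral comparable to $\int_{2P_+}(u-\min u)\,\pi\,dy$.

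Next I would identify the connected center $Z(\text{Aut}(M))^0$. For a $G$-compactification $\text{Aut}^0(M)$ contains the image of $G\times G$, and on the class of $K\times K$-invariant potentials the only reductive automorphism directions that survive are central: I identify the relevant one-parameter subgroups $\sigma=\exp(t\xi)$ with $\xi$ in the center $\mathfrak z$ of $\mathfrak g$, acting on reduced potentials by the affine shift $u\mapsto u+t\langle\xi,\cdot\rangle$. The crucial structural input is that the Ding functional is affine along these automorphism-orbit geodesics, and that the existence of $\omega_g$ makes the (modified) Futaki invariant vanish along $\mathfrak z$; equivalently this is the barycenter balancing $\mathbf b(2P_+)\in4\rho+\Xi$ of Theorem \ref{thm-1.2}(2). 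Consequently $\mathcal D$ is \emph{constant} along the center orbits, i.e.
\begin{align}
\mathcal D(\phi_\sigma)=\mathcal D(\phi),\qquad \sigma\in Z(\text{Aut}(M))^0 .\notag
\end{align}

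With this in hand the estimate is immediate in structure. Choose $\sigma_*\in Z(\text{Aut}(M))$ realizing (up to $\varepsilon$) the infimum $\inf_{\sigma}I(\phi_\sigma)$, and let $u_*$ be the reduced potential of $\phi_{\sigma_*}$; this is the ``centered'' representative, for which the comparison between the normalized polytope integral and $I$ becomes two-sided, giving $\int_{2P_+}u_*\,\pi\,dy\ge c\,I(\phi_{\sigma_*})-C=c\,\inf_{\sigma}I(\phi_\sigma)-C$. Applying Theorem \ref{thm-1.2}(1) to $u_*$ and using the center-invariance of $\mathcal D$ then yields
\begin{align}
\mathcal D(\phi)=\mathcal D(\phi_{\sigma_*})=\mathcal D(u_*)\ge c_0\int_{2P_+}u_*\,\pi\,dy-C_0\ge c_0 c\,\inf_{\sigma}I(\phi_\sigma)-C_0',\notag
\end{align}
which is exactly \eqref{zhu-conj-eq} with the constants renamed.

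The main obstacle is the comparison in the third step: proving that, for the centered potential, the linear polytope functional $\int_{2P_+}u_*\,\pi\,dy$ controls Aubin's $I(\phi_{\sigma_*})$ uniformly, which requires a convexity/Legendre estimate for convex functions on $2P$ together with careful control of the weight $\pi$ near $\partial P$, and the companion fact that the infimum over the \emph{center} already exhausts the flat directions of $\mathcal D$ on the $\mathfrak K$-invariant class (so nothing is lost by not taking the infimum over all of $\text{Aut}^0(M)$). A secondary but conceptual difficulty is that this argument is special to the compactification setting: for a general Fano manifold there is no polytope reduction, and one would instead have to run the convexity of $\mathcal D$ along weak geodesics directly and reduce the infimum over the center from the structure of $\text{Aut}(M)$, which is why \eqref{zhu-conj-eq} remains open in full generality.
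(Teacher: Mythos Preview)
The statement you are attempting is a \emph{conjecture} in the paper, not a theorem that the paper proves. The paper explicitly says that Hisamoto confirmed it by other methods, and that from Theorem~\ref{thm-1.2} one can only derive an \emph{analogous} inequality in which $Z(\mathrm{Aut}(M))$ is replaced by the possibly larger group $Z(G)$ and the class of $\mathfrak K$-invariant potentials is replaced by the larger class of $K\times K$-invariant potentials, via \cite[Lemmas~4.10--4.11]{LZ}. There is no ``paper's own proof'' of Conjecture~\ref{zhu-conj} to compare against; what the paper offers is exactly the reduction you sketch, yielding the weaker variant.

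Your outline is therefore faithful to the paper's derivation of that weaker variant: pass from $\phi$ to the Legendre function $u$, apply the coercivity \eqref{0108}, and convert $\int_{2P_+}u\,\pi\,dy$ back to $I(\phi)$ using the comparison lemmas of \cite{LZ}; the center directions are absorbed precisely because Theorem~\ref{uniqueness-T} identifies the orbit of K\"ahler--Einstein potentials with $Z(G)$. The point you flag as a ``secondary but conceptual difficulty'' is in fact the only real gap between what you can prove and the conjecture as stated: you identify the flat directions of $\mathcal D$ with $\mathfrak z(\mathfrak g)$, i.e.\ with $Z(G)$, and then take the infimum over $Z(G)$. Since the paper itself warns that $Z(G)$ may strictly contain $Z(\mathrm{Aut}(M))$, the infimum over $Z(G)$ can be strictly smaller than the infimum over $Z(\mathrm{Aut}(M))$, so your final inequality is \emph{weaker} than \eqref{zhu-conj-eq}, not equivalent to it. Nothing in your argument (nor in the paper) bridges this; to get \eqref{zhu-conj-eq} with $Z(\mathrm{Aut}(M))$ on a group compactification one would need, in addition, to show that every element of $Z(G)$ acting nontrivially on the K\"ahler--Einstein orbit actually lies in $Z(\mathrm{Aut}(M))$, which is not established here. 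Also note that your first step (``averaging lets me restrict to $K\times K$-invariant potentials'') is unnecessary: since $K\times K\subset\mathfrak K$, every $\mathfrak K$-invariant potential is already $K\times K$-invariant.
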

Very recently, Hisamoto \cite[Theorem 4.3]{Hisamoto} confirmed this conjecture. His approach uses deep results from \cite{Berman-Darvas-Lu} to check hypothesises of the properness pricinple in \cite[Section 3.2]{DR}. While by a reduction progress \cite[Lemma 4.10-4.11]{LZ}, it is direct to derive from Theorem \ref{thm-1.2} that an analogous of \eqref{zhu-conj-eq} holds if we replace $Z(\text{Aut}(M))$ by a possible larger group $Z(G)$, but on a larger space which consists of all $K\times K$-invariant K\"ahler potentials. It seems to be more nature and practical to consider $K\times K$ than $\mathfrak K$ on group compactifications, since in many cases, the full automorphism group $\text{Aut}(M)$, as well as $\mathfrak K$, is still unknown.

Our method is direct by studying the reduced Ding functional on a space of convex functions $\mathcal E^1_{K\times K}(2P)$. From existence to properness we want to use the argument from \cite{DR}. For this purpose, we need first to prove a uniqueness of $K\times K$-invariant (singular) K\"ahler-Einstein metrics (see Theorem \ref{uniqueness-T} below). Then we show the existence of (singular) K\"ahler-Einstein metric implies properness of the reduced Ding functional $\mathcal D(\cdot)$. Once this shown, \eqref{0109} then follows from an estimate of $\mathcal D(\cdot)$ along a special ray in $\mathcal E^1_{K\times K}(2P)$ (cf. Proposition \ref{stab-bary} (2) below).

The paper is organized as follows: in Section 2 we collect basic definitions and properties concerning polarized compactifications and reduced Ding functional on them. Section 3 is devoted to the uniqueness theorem for a general $\mathbb Q$-Fano variety, namely Theorem \ref{uniqueness-T}. In section 4 we prove Theorem \ref{thm-1.2}. The two Appendixes collect useful properties concerning structure of Aut$(M)$.

\subsection*{Notations} Now we fix the notations in the following sections except the Appendix. We denote by
\begin{itemize}
\item $K$-a connected, compact Lie group;
\item $G=K^\mathbb C$-the complexification of $K$, which is a complex, connected reductive Lie group;
\item $J_G$-the complex structure of $G$;
\item $T$-a fixed maximal torus of $K$ and $T^\mathbb C$ its complexification;
\item $\mathfrak a:=J_G\mathfrak t$-the non-compact part of $\mathfrak t^\mathbb C$;
\item $\Phi$-the root system with respect to $G$ and $T^\mathbb C$;
\item $W$-the Weyl group with respect to $G$ and $T^\mathbb C$;
\item $\Phi_+$-a chosen system of positive roots in $\Phi$;
\item $\text{Ad}_\sigma(\cdot):=\sigma(\cdot)\sigma^{-1}$-the conjugate of some subgroup or Lie algebra by some element $\sigma$.
\end{itemize}
\subsection*{Acknowledgement} The authors would like to sincerely thank Prof. M. Brion for telling them Proposition \ref{max-tor-unique} in \cite{Brion-letter}, which plays a crucial role in proving Theorem \ref{uniqueness-T}. They would also like to thank Professor X. Zhu for introducing us this problem, Prof. D. A. Timash\"ev, Prof. T. Delcroix, Prof. Chi Li and Prof. F. Wang for valuable comments.

\section{Preliminaries}
\subsection{Polarized compactifications and associated polytopes}
Let $(M,L)$ be a polarized compactification of $G$. It is known that the closure $Z$ of $T^\mathbb C$ in $M$, together with $L|_Z$ is a polarized toric variety. Indeed, $L|_Z$ is $WT^\mathbb C$-linearized. The polytope associated to $(M,K^{-1}_M)$ is defined as the associated polytope of $(Z,K_M^{-1}|_Z)$ (cf. \cite[Section 2.1]{AK} and \cite[Section 2.2]{Del2}). It is a strictly convex, $W$-invariant rational polytope in $\mathfrak a^*=J\mathfrak t^*$.
Denote by $\mathfrak a^*_+$ the corresponding positive Weyl chamber\footnote{When $G=T^\mathbb C$, we take $\mathfrak a^*_+=\mathfrak a^*.$}
$$\mathfrak a_+^*:=\{y\in\mathfrak a^*|\langle\alpha,y\rangle\geq0,~\forall\alpha\in\Phi_+\}.$$
Choose a $W$-invariant inner product $\langle\cdot,\cdot\rangle$ on $\mathfrak a^*$ which extends the Cartan-Killing form on the semisimple part $\mathfrak a^*_{ss}$ (cf. \cite[Introduction]{Del2}). Let $P_+$ be the positive part of $P$ defined by $P_+=P\cap\mathfrak a^*_+$.
The weighted barycenter of $2P_+$ is defined by
\begin{align}\label{bar-def}
\mathbf{b}(2P_+)\,=\,\frac{\int_{2P_+}y\pi(y) \,dy}{\int_{2P_+}\pi(y) \,dy},
\end{align}
where $\pi(y)=\prod_{\alpha\in\Phi_+}\langle\alpha,y\rangle^2.$

\subsection{The $\mathcal E^1$-space and reduced Ding functional}
\subsubsection{Singular K\"ahler-Einstein metric and the $\mathcal E^1$-space}
For a $\mathbb Q$-Fano variety $M$, by Kodaira's embedding Theorem, there is an integer $\ell > 0$ such that we can embed $M$ into a projective space $\mathbb {CP}^N$ by a basis of $H^0(M, K_M^{-\ell})$. Then we have a metric
$\omega_0\,=\,\frac{1}{\ell}\,\omega_\text{FS}|_{M}\,\in \,2\pi c_1(M),$
where $\omega_\text{FS}$ is the Fubini-Study metric of $\mathbb {CP}^N$.
Moreover, there is a Ricci potential $h_0$ of $\omega_0$ such that
$$\text{Ric}(\omega_0)-\omega_0\,=\,\sqrt{-1}\partial\bar\partial h_0, \text{ on the regular part } M_\text{reg}.$$
In case that $M$ has only klt-singularities, $e^{h_0}$ is $L^p$-integrate for some $p>1$ (cf. \cite{DT, BBEGZ}).

For a general (possibly unbounded) K\"ahler potential $\varphi$ and
$$\omega_\varphi:=\omega_0+\sqrt{-1}\partial\bar\partial\varphi,$$
define its complex Monge-Amp\`ere measure $\omega_{\varphi}^n$ by
$$\omega_{\varphi}^n\,=\,\lim_{j\to \infty}\,\omega_{\varphi_j}^n,$$
where $\varphi_j\,=\,{\rm max}\{\varphi,-j\}$.
According to \cite{BBEGZ}, we say that $\varphi$ (or $\omega_{\varphi}^n$)  has  full Monge-Amp\'ere (MA) mass if
$$\int_M \omega_{\varphi}^n\,=\,\int_M \omega_0^n.$$
The MA-measure $\omega_{\varphi}^n$ with full MA-mass has no mass on the pluripolar set of $\varphi$ in $M$. Thus we only need to consider the
measure on the regular part $M_\text{reg}$.  

\begin{defi}\label{sg-singular-ke} We call $\omega_{\varphi}$ a (singular) K\"ahler-Einstein metric on $M$ with full MA-mass if $\omega_{\varphi}^n$ has full MA-mass and $\varphi$ satisfies the following complex Monge-Amp\'ere equation,
\begin{align}\label{sg-singular-ke-equation}
\omega_{\varphi}^n\,=\,e^{h_0-\varphi}\omega_0^n.
\end{align}
\end{defi}

As in the smooth case, there is a well-known Euler-Langrange functional for K\"ahler potentials associated to \eqref{sg-singular-ke-equation},  often referred as the Ding functional or F-functional, defined by (cf. \cite{Di88})
\begin{align}\label{ding-functional}
 F(\phi)\,=\,-\frac1{(n+1)V}\sum_{k=0}^n\,\int_M\phi\omega_\phi^k\wedge\omega_0^{n-k}-\log\left(\frac1V\int_Me^{h_0-\phi}\omega_0^n\right).
\end{align}
On a $\mathbb Q$-Fano manifold with klt-singularities, Berman-Boucksom-Eyssidieux-Guedj-Zeriahi \cite{BBEGZ} showed that $F(\cdot)$ can be defined on the space $\mathcal E^1(M,-K_M)$ given by
\begin{align}\mathcal E^1(M,-K_M)\,=\,\{\phi| ~&\phi \text{ has full MA mass and }  \notag\\
& \sup_M\phi =0,~ I(\phi)=\int_M-\phi\omega_\phi^n<\infty\}.\notag
\end{align}
They also showed that $\mathcal E^1(M,-K_M)$ is compact in certain weak topology. By a result of Davas \cite{DT},
$\mathcal E^1(M,-K_M)$ is in fact compact in the topology of $L^1$-distance. This provides a variational approach to \eqref{sg-singular-ke-equation}.

It has been shown in \cite{BBEGZ} that if $\varphi$ is a solution of  \eqref{sg-singular-ke-equation}, then it is $C^\infty$ on $M_{\rm reg}$. Thus $\omega_{\varphi}$ satisfies the usual K\"ahler-Einstein equation $\text{Ric}(\omega_{\varphi})=\omega_{\varphi}$ on $M_\text{reg}$.

\subsubsection{$\mathcal E^1$-space and reduced Ding functional on $G$-compactifications}
By the standard $KAK$-decomposition \cite[Section 3.5.3]{Zhelobenko-Shtern}, there is a bijection between $K\times K$-invariant functions $\Psi$ on $G$ and $W$-invariant functions  $\psi$ on
$\mathfrak a$ which is given  by
$$\Psi( \exp(\cdot))\,=\,\psi(\cdot):~{\mathfrak a}\to\mathbb R.$$
Clearly, when a $W$-invariant $\psi$ is given, $\Psi$ is well-defined, and vice versa. From now on, for simplicity, we will not distinguish $\psi$ and $\Psi$,  and we will call $\Psi$ convex on $G$ if  $\psi$  is convex on ${\mathfrak a}$.

Such a correspondence can be extended to $K\times K$-invariant quantities on whole $M$ (see \cite[Section 3.4]{Timashev-lecture-en}). For example, let $\omega\in2\pi c_1(L)$ be a $K\times K$-invariant K\"ahler metric of $M$. Then $\omega|_Z$ is a toric K\"ahler metric in $2\pi c_1(L|_Z)$ and there is a $W$-invariant, strictly convex function $\psi$ on $\mathfrak a$ such that (cf. \cite{AL})
\begin{align*}
\omega=\sqrt{-1}\partial\bar\partial \psi, ~{\rm on}~ G.
\end{align*}
Conversely, any $W$-invariant toric K\"ahler metric $\omega=\sqrt{-1}\partial\bar\partial \psi\in2\pi c_1(L|_Z)$ on $Z$ extends to a $K\times K$-invariant K\"ahler metric $\omega\in2\pi c_1(L)$ on $M$.

Denotr by $O$ the origin. In the following, we take $L=K^{-1}_M$ and fix a background metric $\omega_0=\sqrt{-1}\partial\bar\partial\psi_0$ such that
\begin{align}\label{normalization}
\inf_{\mathfrak a} \psi_0\,=\, \psi_0(O)\,=\,0,
\end{align}
We restrict ourself to $\mathcal E_{K\times K}^1(M,K^{-1}_M)$, the space of $K\times K$-invariant functions $\phi$ in $\mathcal E^1(M,K^{-1}_M)$, such that $\psi_\phi=\psi_0+\phi$ is normalized as \eqref{normalization}. For any such $\phi$, consider the Legendre function $u_\phi$ of $\psi_\phi$. As in \cite{Coman-Guedj-Sahin-Zeriahi}, it is showed in \cite[Section 4]{LTZ2} that
\begin{theo}\label{E1-legendre}
 A  K\"ahler potential $\phi\in\mathcal E^1_{K\times K}(M,-K_M)$  if and only if the Legendre function $u_\phi$ of $\psi_\phi$ lies in
 \begin{align}\mathcal E^1_{K\times K}(2P)=\{u| &\text{$u$ is convex, $W$-invariant on $2P$ which satisfies}
\notag\\
&\inf_{2P}u=u(O)=0\text{ and }\int_{2P_+} u \pi\,dy<+\infty\}.\notag
\end{align}
Hence the Legendre transformation gives a bijection between $\mathcal E^1_{K\times K}(M,-K_M)$ and $\mathcal E^1_{K\times K}(2P)$. As a consequence, $u_\phi$ is locally bounded in  Int$(2P)$ if $\phi\in\mathcal E^1_{K\times K}(M,-K_M)$.
\end{theo}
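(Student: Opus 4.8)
The plan is to push the whole statement to the convex-analytic side via the $KAK$-reduction, and then match the two conditions defining $\mathcal{E}^1_{K\times K}(M,-K_M)$ --- full Monge--Amp\`ere mass and $I(\phi)<\infty$ --- with the two conditions defining $\mathcal{E}^1_{K\times K}(2P)$ --- finiteness of $u_\phi$ (together with the normalization and $W$-invariance) and $\int_{2P_+}u_\phi\pi\,dy<\infty$. First I would recall, from \cite{Timashev-lecture-en,AL} and as used in \cite[Section 4]{LTZ2} and \cite{Del2}, the dictionary: a $K\times K$-invariant $\phi$ with $\psi_\phi=\psi_0+\phi$ normalized by \eqref{normalization} corresponds to a $W$-invariant convex function $\psi_\phi$ on $\mathfrak{a}$ with $\nabla\psi_\phi(\mathfrak{a})\subseteq 2P$, and for any $K\times K$-invariant $F$ on $M$, with associated $W$-invariant $f$ on $\mathfrak{a}$, one has the integration formula
\[
\int_M F\,\omega_\phi^n \;=\; C\int_{\nabla\psi_\phi(\mathfrak{a}_+)}\hat f(y)\,\pi(y)\,dy ,
\]
where $\hat f(y)=f(x)$ whenever $y=\nabla\psi_\phi(x)$ and $C>0$ depends only on $(M,-K_M)$; in particular $\int_M\omega_0^n=C\int_{2P_+}\pi\,dy$. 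This identity, together with the fact that $M\setminus G$ is a proper analytic (hence pluripolar) subset so that a full-mass $\omega_\phi^n$ lives on $M_{\mathrm{reg}}\supseteq G$, reduces everything to statements about the gradient image of $\psi_\phi$ and about $u_\phi:=\psi_\phi^{\ast}$; it is also where the weight $\pi$ of \eqref{bar-def} enters.

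Next I would treat the full-mass condition. Taking $F\equiv 1$, the equality $\int_M\omega_\phi^n=\int_M\omega_0^n$ is equivalent to $|\,2P\setminus\nabla\psi_\phi(\mathfrak{a})\,|=0$, i.e.\ to $\nabla\psi_\phi(\mathfrak{a})$ having full Lebesgue measure in $2P$. By elementary convex analysis (the closure of the range of the subdifferential of $\psi_\phi$ equals $\overline{\mathrm{dom}\,u_\phi}$, and a convex function is finite and locally bounded --- indeed locally Lipschitz --- on the interior of its domain) this is equivalent to $u_\phi$ being finite on $\mathrm{Int}(2P)$; in particular this gives the ``consequence'' stated in the theorem. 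The normalizations transfer for free: $u_\phi(O)=\sup_{x}(-\psi_\phi(x))=-\inf_{\mathfrak{a}}\psi_\phi=0$, and biduality $\psi_\phi=\psi_\phi^{\ast\ast}$ gives $\inf_{2P}u_\phi=-\psi_\phi(O)=0$, while $W$-invariance of $u_\phi$ is inherited from that of $\psi_\phi$. Since the Legendre transform is an involution on the class of $W$-invariant convex lower semicontinuous functions $u$ with $u(O)=\inf u=0$, this already identifies the two classes modulo the $I$-functional condition.

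Then I would handle the $I$-functional. Applying the formula with $F=-\phi$, i.e.\ $f(x)=\psi_0(x)-\psi_\phi(x)$, and using that at $y=\nabla\psi_\phi(x)$ one has $\psi_\phi(x)=\langle x,y\rangle-u_\phi(y)$ with $x=\nabla u_\phi(y)$, one finds that the integrand associated to $-\phi$ is $u_\phi(y)+\big(\psi_0(\nabla u_\phi(y))-\langle\nabla u_\phi(y),y\rangle\big)$. The bracketed correction is $\geq -\sup_{2P}u_0$, where $u_0=\psi_0^{\ast}$ is bounded on $2P$; since $u_\phi\geq 0$ and $\int_{2P_+}\pi\,dy<\infty$ this immediately gives $I(\phi)\geq C\int_{2P_+}u_\phi\,\pi\,dy-C'$, hence the implication $I(\phi)<\infty\Rightarrow\int_{2P_+}u_\phi\,\pi\,dy<\infty$. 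For the reverse implication one needs an upper bound on the correction term near $\partial(2P)$; this is the genuinely analytic input and is where one follows \cite{Coman-Guedj-Sahin-Zeriahi}, controlling the mass of $\omega_\phi^n$ near $M\setminus G$ in terms of the boundary behaviour of $u_\phi$. Combining the three steps gives both directions of the equivalence together with the bijectivity of the Legendre transformation.

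I expect the main obstacle to be making the $KAK$/moment-map change of variables fully rigorous for merely-$\mathcal{E}^1$ potentials on a possibly singular $M$ --- i.e.\ verifying that the non-pluripolar complex Monge--Amp\`ere measure on $G$ pushes forward exactly to the real Monge--Amp\`ere measure of $\psi_\phi$, that no mass is lost at $M\setminus G$ or at $M\setminus M_{\mathrm{reg}}$ (both pluripolar, cf.\ \cite{BBEGZ,DT}), and the accompanying upper control of the $I$-correction near $\partial(2P)$ --- which together constitute the content transported from \cite{Coman-Guedj-Sahin-Zeriahi} (and carried out in \cite[Section 4]{LTZ2}). Granting that, all remaining steps are the convex-analytic bookkeeping sketched above.
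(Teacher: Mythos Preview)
The paper does not actually prove this theorem in the text; it is stated as a result quoted from \cite[Section 4]{LTZ2}, with \cite{Coman-Guedj-Sahin-Zeriahi} cited as the toric prototype. Your sketch is exactly the outline one expects that argument to take --- the $KAK$ integration formula pushing $\omega_\phi^n$ to the weighted real Monge--Amp\`ere measure, full mass translating into $\overline{\nabla\psi_\phi(\mathfrak a)}=2P$ (hence $u_\phi$ finite on $\mathrm{Int}(2P)$), the normalizations matching via Legendre duality, and $I(\phi)$ differing from $\int_{2P_+}u_\phi\,\pi\,dy$ by a term controlled by the bounded reference potential $u_0$ --- and you have correctly isolated the only genuinely nontrivial step (the upper control of the correction near $\partial(2P)$, i.e.\ no mass escaping to the boundary for merely-$\mathcal E^1$ potentials on a possibly singular $M$) as the input imported from \cite{Coman-Guedj-Sahin-Zeriahi} and \cite{LTZ2}. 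So there is nothing to compare: your proposal is a faithful expansion of what the paper merely cites.
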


For any $u\in{\mathcal  E}^1_{K\times K}(2P)$, its Legendre function
$$\psi_u(x)\,=\,\sup_{y\in 2P}\{\langle x, y\rangle-u(y)\}\,\le\, v_{2P}(x)$$
corresponds to  a $K\times K$-invariant weak K\"ahler potential $\phi_u=\psi_u-\psi_0$ which belongs to $\mathcal E^1_{K\times K}(M,-K_M)$. 
As we know,  $e^{-\phi_u}\in L^p(\omega_0)$ for any $p\ge 0$. Thus $$\int_{\mathfrak a_+}e^{-\psi_u}\mathbf J(x)dx$$ is well-defined.

Define the following  functional   on $\mathcal E^1_{K\times K}(2P)$ by
  $$\mathcal D(u)\,=\,\mathcal L(u)+\mathcal F(u),$$
where
\begin{align}\label{L(u)}
\mathcal L(u)\,=\,\frac1V\int_{2P_+}u\pi\,dy-u(4\rho)
\end{align}
and
\begin{align}\label{F(u)-0}
\mathcal F(u)\,=\,-\log\left(\int_{\mathfrak a_+}e^{-\psi_u}\mathbf J(x)dx\right)+u(4\rho).
\end{align}
It is easy to see that on  a $\mathbb Q$-Fano  compactification  of $G$,
$$\mathcal L(u_\phi)+u_\phi(4\rho)\,=\,-\frac1{(n+1)V}\sum_{k=0}^n\int_M\phi\omega_\phi^k\wedge\omega_0^{n-k}$$
and $\mathcal D(u_\phi)$ is just the Ding functional $F(\phi)$ for $\phi\in\mathcal E_{K\times K}(M,K^{-1}_M)$ \eqref{ding-functional}. Hence in the following we call $\mathcal D(\cdot)$ the reduced Ding functional.

It is showed in \cite[Section 6]{LTZ2} that under the condition \eqref{0109}, the minimizer of the reduced Ding functional on the space $\mathcal E^1_{K\times K}(M,K_M^{-1})$ exists and is a solution of \eqref{sg-singular-ke-equation}.

\section{Uniqueness of (singular) K\"ahler-Einstein metrics}
Let $M$ be any compact $\mathbb Q$-Fano variety. It is well-known that when $M$ admits a (singular) K\"ahler-Einstein metrics, the neutral component ${\rm Aut}^0(M)$ of its automorphism group ${\rm Aut}(M)$ is reductive, and is the complexification of the isometry group of this (singular) K\"ahler-Einstein metrics \cite[Theorem 5.2]{BBEGZ}. Furthermore, the  (singular) K\"ahler-Einstein metrics is unique up to an ${\rm Aut}^0(M)$-action \cite[Theorem 5.1]{BBEGZ}. In this section, we will further discuss the uniqueness of $K\times K$-invariant (singular) K\"ahler-Einstein metrics on a $\mathbb Q$-Fano $G$-compactification and prove that any such two metrics can only be different from a $Z(G)$-action.

\begin{theo}\label{uniqueness-T}
Let $M$ be a $\mathbb Q$-Fano $G$-compactification. Then the $K\times K$-invariant (singular) K\"ahler-Einstein metric, if exists, is unique up to a $Z(G)$-action.
\end{theo}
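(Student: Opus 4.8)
The plan is to reduce the uniqueness statement to the general uniqueness theorem of Berman–Boucksom–Eyssidieux–Guedj–Zeriahi (\cite[Theorem 5.1]{BBEGZ}), which says that any two (singular) K\"ahler-Einstein metrics on $M$ differ by an element of ${\rm Aut}^0(M)$, and then to pin down which automorphisms can relate two $K\times K$-invariant ones. So suppose $\omega_1 = \omega_{\varphi_1}$ and $\omega_2 = \omega_{\varphi_2}$ are both $K\times K$-invariant (singular) K\"ahler-Einstein metrics with full MA-mass. By \cite[Theorem 5.1]{BBEGZ} there is $\sigma \in {\rm Aut}^0(M)$ with $\sigma^*\omega_2 = \omega_1$. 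The goal is to show $\sigma$ can be taken in $Z(G)$; equivalently, that modulo the stabilizer of $\omega_1$ we may assume $\sigma \in Z(G)$.

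The key structural input is the description of ${\rm Aut}^0(M)$ for a $\mathbb Q$-Fano $G$-compactification (this is what the Appendices are advertised to provide): ${\rm Aut}^0(M)$ contains $G \times G$ acting on the open orbit, its connected automorphism group is generated by $G\times G$ together with a torus coming from the toric geometry of the closure $Z$ of $T^{\mathbb C}$, and crucially — by Brion's Proposition \ref{max-tor-unique}, acknowledged as the crucial ingredient — the maximal torus $T^{\mathbb C}\times T^{\mathbb C}$-type subgroup (or more precisely the relevant maximal torus of ${\rm Aut}^0(M)$) is unique up to conjugation. Now run the standard argument: since $\omega_1$ is $K\times K$-invariant, its stabilizer ${\rm Stab}(\omega_1) \subset {\rm Aut}^0(M)$ is a compact group containing $K\times K$; since $\omega_1$ is K\"ahler-Einstein, ${\rm Aut}^0(M)$ is reductive and is the complexification of ${\rm Stab}(\omega_1)$, i.e. ${\rm Stab}(\omega_1)$ is a maximal compact subgroup. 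Likewise $\sigma^{-1}{\rm Stab}(\omega_1)\sigma = {\rm Stab}(\omega_2)$ is a maximal compact subgroup containing $K\times K$ (since $\omega_2$ is also $K\times K$-invariant and $\sigma$ conjugates one to the other, $K\times K \subset {\rm Stab}(\omega_2)$ as well). Both $K\times K$ sit inside maximal compact subgroups of ${\rm Aut}^0(M)$, and since all maximal compact subgroups are conjugate, after modifying $\sigma$ on the left by an element of ${\rm Stab}(\omega_1)$ (which does not change the fact that $\sigma^*\omega_2 = \omega_1$) we may assume $\sigma$ normalizes $K\times K$, hence normalizes its complexification $G\times G$, hence — using that automorphisms of $G\times G$ preserving the compactification structure together with the conjugacy of maximal tori (Proposition \ref{max-tor-unique}) — we may further arrange $\sigma$ to normalize $T^{\mathbb C}\times T^{\mathbb C}$ and act trivially on the root data, so $\sigma$ lies in the connected center, i.e. in $Z(G)$ embedded diagonally (or anti-diagonally) in $G\times G \subset {\rm Aut}^0(M)$. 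Finally one checks that the residual freedom — an element of $Z(G)$ that fixes $\omega_1$ — is absorbed, so the two metrics genuinely differ by a $Z(G)$-action.

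The main obstacle, and the step requiring the most care, is the passage from ``$\sigma$ normalizes $K\times K$'' to ``$\sigma \in Z(G)$'': a priori the normalizer of $G\times G$ inside ${\rm Aut}^0(M)$ could contain the diagonal flip $G\times G \to G\times G$, $(g,h)\mapsto(h,g)$, as well as outer automorphisms of $G$ and the extra toric torus directions, and one must rule these out or show they are accounted for. This is exactly where the uniqueness of the maximal torus up to conjugation (Proposition \ref{max-tor-unique}, due to Brion) is essential: it forces any such $\sigma$, after conjugation, into the maximal torus of ${\rm Aut}^0(M)$, and then the $K\times K$-invariance of both metrics together with the combinatorics of the moment polytope $2P$ (the metric determines, via its Legendre transform in $\mathcal E^1_{K\times K}(2P)$, a convex function on $2P$, and a torus element acting by translation that preserves a normalized such function must be trivial or central) kills all remaining freedom except $Z(G)$. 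I would organize the write-up as: (i) recall ${\rm Aut}^0(M)$ reductive and ${\rm Stab}(\omega_i)$ maximal compact; (ii) conjugate $\sigma$ to normalize $K\times K$ and then $G\times G$; (iii) invoke Proposition \ref{max-tor-unique} and the structure results of the Appendices to land $\sigma$ in the maximal torus; (iv) use $K\times K$-invariance and the $\mathcal E^1_{K\times K}(2P)$-picture (Theorem \ref{E1-legendre}) to conclude $\sigma \in Z(G)$.
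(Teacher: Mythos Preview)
Your overall strategy is right and matches the paper's: invoke \cite[Theorem 5.1]{BBEGZ} to get $\sigma\in\mathfrak G:={\rm Aut}^0(M)$ with $\sigma^*\omega_2=\omega_1$, use that both isometry groups $\mathfrak K_1,\mathfrak K_2$ are maximal compact in the reductive group $\mathfrak G$ and both contain $K\times K$, and then try to force $\sigma$ into $Z(G)$ modulo isometries. But two substantive points separate your sketch from a proof.

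First, you misstate Proposition \ref{max-tor-unique}: it does \emph{not} say the maximal torus is ``unique up to conjugation'' (that is automatic in any reductive group) but that the maximal torus of $\mathfrak G$ containing $T\times T$ is \emph{absolutely} unique. Equivalently, the centralizer of $T\times T$ in $\mathfrak G$ is already a torus $\mathfrak T$. This absolute uniqueness is what gives $\mathfrak T\subset\mathfrak K_1\cap\mathfrak K_2$ and lets one work with $\mathfrak T$ rather than with $K\times K$.

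Second, and this is the real gap, the step ``after modifying $\sigma$ by an element of ${\rm Stab}(\omega_1)$ we may assume $\sigma$ normalizes $K\times K$'' is unjustified. From $K\times K\subset\mathfrak K_1$ and $K\times K\subset\mathfrak K_2=\text{Ad}_\sigma\mathfrak K_1$ you only get that $K\times K$ and $\text{Ad}_{\sigma^{-1}}(K\times K)$ both lie in $\mathfrak K_1$; nothing says they are conjugate \emph{inside} $\mathfrak K_1$. The paper avoids this by normalizing the torus instead: since $\mathfrak T$ is a maximal torus of both $\mathfrak K_1$ and $\mathfrak K_2$, one can adjust $\sigma$ by an element of $\mathfrak K_2$ so that $\text{Ad}_\sigma\mathfrak T=\mathfrak T$. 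A Cartan-type factorization in $N_{\mathfrak G}(\mathfrak T)$ then writes $\sigma$ as (elements of $\mathfrak K_1$) times some $t\in\mathfrak T^\mathbb C$, and the problem is reduced to: $t\in\mathfrak T^\mathbb C$ with $\text{Ad}_{t^{-1}}(K\times K)\subset\mathfrak K_1$, show $t$ centralizes $K\times K$ modulo the compact torus $\mathfrak T$. This is exactly Lemma \ref{centralizer}, an explicit root-space computation which is the technical heart of the argument and which your proposal omits entirely. Once $t'\in C_{\mathfrak G}(K\times K)=C_{\mathfrak G}(G\times G)$, the identification with a $Z(G)$-action is a known structural fact for group compactifications (\cite[Proposition 1.8]{Timashev-book}); your proposed use of the $\mathcal E^1_{K\times K}(2P)$-picture for this last step is not how the paper proceeds and would not by itself handle the passage from ``centralizes $G\times G$'' to ``is realized by $Z(G)$''.
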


\begin{proof}
Denote by $\mathfrak G={\rm Aut}^0(M)$ and fix $\mathfrak K$ a maximal compact subgroup of $M$. Let $\omega_1,\omega_2$ be two $K\times K$-invariant (singular) K\"ahler-Einstein metrics and denote by $\mathfrak K_1,\mathfrak K_2$ their groups of isometry, respectively.
Then $\mathfrak G=\mathfrak K_1^\mathbb C=\mathfrak K_2^\mathbb C$. Hence there is a $\sigma\in\mathfrak G$ such that $\text{Ad}_\sigma\mathfrak K_1=\mathfrak K_2$.

Since $\mathfrak G$ is reductive, by Proposition \ref{max-tor-unique}, $T\times T(\subset K\times K)$ extends to a unique maximal torus $\mathfrak T$ of $\mathfrak G$. Hence $\mathfrak T\subset\mathfrak K_1\cap\mathfrak K_2$ and up to replacing $\sigma$ by $\tilde{\sigma}\cdot\sigma$ for some $\tilde\sigma\in\mathfrak K_2$, we have $\text{Ad}_\sigma\mathfrak T=\mathfrak T$. Thus both $\text{Ad}_{\sigma^{-1}}\mathfrak T$ and $\mathfrak T$ are contained in $\mathfrak K_1$.

Since $\mathfrak T$ is maximal, there are $\sigma'\in N_{\mathfrak K_1}(\mathfrak T),\sigma''\in\mathfrak K_1$ and a $t\in\mathfrak T^\mathbb C$ such that $\sigma''\cdot\sigma^{-1}=\sigma'\cdot t$. We have $t=(\sigma'^{-1}\cdot\sigma'')\cdot\sigma^{-1}$ Hence $\text{Ad}_{t^{-1}}\mathfrak K_1=\mathfrak K_2$.

On the other hand, as both $\mathfrak K_1$ and $\mathfrak K_2=\text{Ad}_{t^{-1}}\mathfrak K_1$ contains $K\times K$, we see that
$$K\times K,\text{Ad}_{t^{-1}}(K\times K)\subset\mathfrak K_1.$$
By Lemma \ref{centralizer}, there is a $\tilde\sigma\in\mathfrak T$ such that $t':=t^{-1} \cdot\tilde\sigma\in C_{K\times K}(\mathfrak G)$, and
\begin{align*}
\omega_2=\sigma^*\omega_1=&(t^{-1}\cdot(\sigma'^{-1}\cdot\sigma''))^*\omega_1\\
=&(t^{-1})^*\omega_1=t'^*(\tilde\sigma^{-1})^*{\omega_1}=t'^*{\omega_1}.
\end{align*}
Recall that $G\times G=(K\times K)^\mathbb C$. Hence $t'\in C_{G\times G}(\mathfrak G)$. By \cite[Proposition 1.8]{Timashev-book}, $t'$ can be realized by a $Z(G)$-action.

\end{proof}

\section{Properness of Modified Ding functional}
In this section, we will prove that when a $\mathbb Q$-Fano $G$-compactification $M$ admits a (singular) K\"ahler-Einstein metric, the reduced Ding functional defined in \cite{LTZ2} will be proper. Moreover, this implies \eqref{0109} and consequently by \cite{Del3}, $M$ is K-stable.

\begin{prop}\label{stab-bary}
Suppose that the barycenter $\mathbf{b}(2P_+)\in \mathfrak a_{ss}$. Then:
\begin{itemize}
\item[(1)] If $\mathcal D(\cdot)$ is bounded from below on $\mathcal E^1_{K\times K}(2P)$. Then
\begin{align}\label{0107}
\mathbf{b}(2P_+)-4\rho\in\overline{\Xi}.
\end{align}
\item [(2)] If \eqref{0108} holds for some uniform constants $c_0,C_0>0$. Then we have \eqref{0109}.

\end{itemize}
\end{prop}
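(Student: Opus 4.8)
The plan is to analyze the asymptotic behavior of $\mathcal D(\cdot)$ along rays generated by linear functions in a suitable cone, and extract the barycenter condition from the sign of the leading term. Concretely, for a vector $\xi\in\overline{\mathfrak a_+^*}$ (the dual cone direction) consider the family $u_t(y)=t\langle\xi,y\rangle$ restricted to $2P$, suitably normalized so that $\inf_{2P}u_t=u_t(O)=0$; since $2P$ is $W$-invariant one replaces $\langle\xi,\cdot\rangle$ by its $W$-symmetrization or, more efficiently, works with a $W$-invariant convex function whose recession behavior is $\max_{w\in W}\langle w\xi,\cdot\rangle$. For such $u_t\in\mathcal E^1_{K\times K}(2P)$ one computes the two pieces of $\mathcal D(u_t)=\mathcal L(u_t)+\mathcal F(u_t)$. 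The linear term $\mathcal L(u_t)$ is \emph{exactly} linear in $t$: from \eqref{L(u)},
\[
\mathcal L(u_t)=\frac{t}{V}\int_{2P_+}\langle\xi,y\rangle\,\pi(y)\,dy - t\langle\xi,4\rho\rangle
= t\,\Big\langle \xi,\ \frac{\int_{2P_+}y\,\pi\,dy}{V}-4\rho\Big\rangle,
\]
and since $V=\int_{2P_+}\pi\,dy$ this is $t\,\langle\xi,\mathbf b(2P_+)-4\rho\rangle$. The Legendre transform $\psi_{u_t}$ is (up to bounded error) the support function of the dilated set, supported near the face of $2P$ where $\langle\xi,\cdot\rangle$ is maximal; the key point is that $\mathcal F(u_t)=-\log\int_{\mathfrak a_+}e^{-\psi_{u_t}}\mathbf J\,dx + u_t(4\rho)$ stays \emph{bounded above} uniformly in $t\ge 0$ for $\xi$ in the interior of the relevant cone — because as $t\to\infty$ the measure $e^{-\psi_{u_t}}\mathbf J\,dx$ concentrates but its total mass does not blow up (the support function only grows, shrinking the region of integration), while $u_t(4\rho)=t\langle\xi,4\rho\rangle$ is controlled by choosing $\xi$ so that $\langle\xi,4\rho\rangle$ is comparable to the relevant boundary value.

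The argument for (1) then runs by contradiction: if $\mathbf b(2P_+)-4\rho\notin\overline{\Xi}$, then by the separation theorem for the closed convex cone $\overline{\Xi}$ (generated by $\Phi_+$) there is a vector $\xi$ with $\langle\xi,\alpha\rangle\ge 0$ for all $\alpha\in\Phi_+$ — i.e.\ $\xi\in\overline{\mathfrak a_+^*}$ — and $\langle\xi,\mathbf b(2P_+)-4\rho\rangle<0$. One must also use $\mathbf b(2P_+)\in\mathfrak a_{ss}$ to ensure the separating functional can be taken in the semisimple part so that the ray stays inside $\mathcal E^1_{K\times K}(2P)$ and the normalization is consistent; this is exactly why the hypothesis $\mathbf b(2P_+)\in\mathfrak a_{ss}$ appears. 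Along the corresponding ray $u_t$, we get $\mathcal D(u_t)\le t\langle\xi,\mathbf b(2P_+)-4\rho\rangle + C \to -\infty$, contradicting boundedness from below. For (2), one instead uses that if \eqref{0108} holds, then along the \emph{same} ray $\mathcal D(u_t)\ge c_0\int_{2P_+}u_t\pi\,dy - C_0 = c_0 t\int_{2P_+}\langle\xi,y\rangle\pi\,dy - C_0$, while $\mathcal D(u_t)\le \mathcal L(u_t)+C$; comparing as $t\to\infty$ forces $\langle\xi,\mathbf b(2P_+)-4\rho\rangle \ge c_0\langle\xi,\mathbf b(2P_+)\rangle$ — after massaging constants, a \emph{strict} inequality $\langle\xi,\mathbf b(2P_+)-4\rho\rangle>0$ for every $\xi\in\overline{\mathfrak a_+^*}\setminus\{0\}$, which is precisely the statement that $\mathbf b(2P_+)-4\rho$ lies in the \emph{open} dual of $\overline{\mathfrak a_+^*}$, i.e.\ in the relative interior $\Xi$ of the cone generated by $\Phi_+$. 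This yields \eqref{0109}.

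The main obstacle I expect is the uniform upper bound on $\mathcal F(u_t)$ along the ray — that is, showing $-\log\int_{\mathfrak a_+}e^{-\psi_{u_t}}\mathbf J\,dx + u_t(4\rho)\le C$ independently of $t$. This requires a careful estimate of the support function $\psi_{u_t}(x)=\sup_{y\in 2P}\{\langle x,y\rangle - t\langle\xi,y\rangle_W\}$ (with the $W$-symmetrized linear term) and of the weight $\mathbf J(x)$, using that $4\rho$ lies in $2P_+$ (a fact one should cite from the structure theory, e.g.\ as in \cite{LTZ2, Del2}) so that $u_t(4\rho)$ is dominated by $t\max_{2P}\langle\xi,\cdot\rangle$. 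A clean way is to split $\mathfrak a_+$ into the region where $\psi_{u_t}$ has already become large (contributing exponentially small mass) and a bounded region, and estimate $\int_{\mathfrak a_+}e^{-\psi_{u_0}}\mathbf J\,dx<\infty$ as the base case; monotonicity $\psi_{u_t}\ge\psi_{u_0}+t\langle\xi,4\rho\rangle - C$ near the relevant face then does the rest. I would also need to double-check that the normalization $u_t(O)=\inf u_t=0$ is compatible with $O\in 2P$ and the choice of $\xi$, adjusting $u_t$ by an affine $W$-invariant correction (a constant) if necessary, which does not affect either leading term since $\mathcal L$ and the combination $\mathcal F$ both shift by the same constant.
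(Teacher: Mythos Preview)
Your overall strategy coincides with the paper's: test $\mathcal D$ along rays $u_t(y)=t\max_{w\in W}\langle w\xi,y\rangle$ for $\xi$ in the closed Weyl chamber of $\mathfrak a_{ss}$, note that $\mathcal L(u_t)=t\langle\xi,\mathbf b(2P_+)-4\rho\rangle$, and reduce everything to a uniform upper bound $\mathcal F(u_t)\le C$. The paper simply specializes $\xi$ to a fundamental weight $\varpi_1$ picked so that the coefficient $c_1$ in $\mathbf b(2P_+)-4\rho=\sum c_i\alpha_i$ is negative (for (1)) or nonpositive (for (2)); your separation argument via the dual cone is equivalent.

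However, your sketch of the bound on $\mathcal F(u_t)$ is in the wrong direction. You argue that ``the total mass does not blow up'' and propose the monotonicity $\psi_{u_t}\ge\psi_{u_0}+t\langle\xi,4\rho\rangle-C$; but that yields an \emph{upper} bound on $\int_{\mathfrak a_+}e^{-\psi_{u_t}}\mathbf J\,dx$, hence a \emph{lower} bound on $-\log(\cdot)$, and after adding $u_t(4\rho)=t\langle\xi,4\rho\rangle$ you get $\mathcal F(u_t)\to+\infty$ --- the opposite of what you want. What is actually needed is a \emph{lower} bound on the integral, of order $e^{u_t(4\rho)}$, so that $\mathcal F(u_t)$ stays bounded above. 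The paper obtains this by passing to $\widetilde\psi_\lambda=\psi_\lambda-4\rho(x)$, showing $\widetilde\psi_\lambda(x)-\inf_{\mathfrak a_+}\widetilde\psi_\lambda\le v_{2P}(x-\lambda\varpi_1)-4\rho(x-\lambda\varpi_1)$, and then using $4\rho\in\mathrm{Int}(2P)$ to find a \emph{translated} region $\lambda\varpi_1+\Omega(\delta_1)\subset\mathfrak a_+$ of definite volume on which the integrand is bounded below uniformly in $\lambda$. The geometric point you are missing is that the mass of $e^{-\psi_{u_t}}\mathbf J\,dx$ does not stay in a fixed region: it drifts with $t$ in the direction $\xi$, and it is this drift (combined with $4\rho\in\mathrm{Int}(2P)$) that produces the cancellation with $u_t(4\rho)$.

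A smaller gap in part (2): from $\langle\xi,\mathbf b(2P_+)-4\rho\rangle\ge c_0 V\langle\xi,\mathbf b(2P_+)\rangle$ you conclude strict positivity, but this requires $\langle\xi,\mathbf b(2P_+)\rangle>0$ for every nonzero $\xi$ in the closed Weyl chamber of $\mathfrak a_{ss}$. This is true (since $\mathbf b(2P_+)$ lies in the open Weyl chamber and the inverse Cartan matrix has positive entries), but it should be stated; the paper sidesteps this by working directly with fundamental weights and the simple-root coefficients $c_i$.
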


\begin{proof}
For item (1). Suppose that \eqref{0107} is not true. With out loss of generality we may assume
\begin{align}\label{0110}\mathbf{b}(2P_+)-4\rho=\sum_{i=1}^rc_i\alpha_i\end{align}
with $c_1<0$. Here $\Phi_{+,s}=\{\alpha_1,...,\alpha_r\}$ are the simple roots. Let $\{\varpi_i\}_{i=1}^r$ be the corresponding fundamental weights such that
$$\langle\alpha_i,\varpi_j\rangle=\frac12|\alpha_i|^2\delta_{ij}.$$
For $\lambda>0$, set
$$u_\lambda(y)=\left\{\begin{aligned}&\lambda\max\{\langle w(\varpi_1),y\rangle|w\in W\},&y\in 2P,\\
&+\infty,&\text{otherwise.}\end{aligned}\right.$$
Then by \eqref{L(u)},
\begin{align*}
\mathcal L(u_\lambda)=\lambda\cdot \frac12c_1|\alpha_1|^2.
\end{align*}

Let $\psi_\lambda$ be the Legendre function of $u_\lambda$ and $$\widetilde{\psi}_\lambda=\psi_\lambda-4\rho(x).$$
Then by \eqref{F(u)-0} and \cite[Lemma 4.8]{LZ},
\begin{align}\label{F(u)}
\mathcal F(u_\lambda)=-\log\int_{\mathfrak a_+}e^{-(\widetilde{\psi}_\lambda-\inf_{\mathfrak a_+}\widetilde{\psi}_\lambda)}\prod_{\alpha\in\Phi_+}(\frac{1-e^{-2\alpha(x)}}2)^2dx.
\end{align}

We want to compute out $\psi_\lambda$. Divide $\mathfrak a^*$ into cones $\sigma_1,...,\sigma_{s_0}$ so that each $\sigma_i$ is a linear domain of the function $u_\lambda$, that is
$$u_\lambda|_{\sigma_i}=\lambda\langle w_i(\varpi_i),y\rangle\text{ for some $w_i\in W$.}$$
Clearly there is a unique one that contains $\mathfrak a^*_+$ and we denote it by $\sigma_+$.

For a convex domain $\Omega\subseteq\mathbb R^n$, denote by $v_\Omega$ its support function. By definition,
\begin{align*}
\psi_\lambda(x)&=\sup_{y}\{\langle x,y\rangle-u_\lambda(y)\}\\
&=\max_{i=1,...,s_0}\sup_{y\in 2P\cap\sigma_i}\{\langle x,y\rangle-\lambda\langle w_i(\varpi_i),y\rangle\}\\
&=\max_{i=1,...,s_0}v_{2P\cap\sigma_i}(x-\lambda w_i(\varpi_1)).
\end{align*}
Since $4\rho\in\text{Int}(2P\cap\sigma_+)$, it is direct to check that
\begin{align*}
v_{2P\cap\sigma_+}(x-\lambda\varpi)-4\rho(x-\lambda\varpi)-4\lambda\rho(\varpi)\geq0.
\end{align*}
It then follows
\begin{align*}
\tilde\psi_\lambda(x)\geq\tilde\psi(\lambda\varpi_1)=-4\lambda\rho(\varpi_1).
\end{align*}
Consequently,
\begin{align*}
\tilde\psi_\lambda(x)-\inf_{\mathfrak a_+}\tilde\psi_\lambda\leq v_{2P}(x-\lambda\varpi_1)-4\rho(x-\lambda\varpi_1)=:\tilde v_{2P}(x-\lambda\varpi_1).
\end{align*}

Note that $4\rho\in\text{Int}(2P)$, thus $\tilde v_{2P}(x-\lambda\varpi_1)$ is proper on $\mathfrak a$. Fix a $\delta_1>0$, there is a $\lambda_1(\delta_1)$ and a convex domain $\Omega(\delta_1)$ such that for any $\lambda\geq\lambda_1(\delta_1)$, we have
\begin{align}\label{0102+}
\tilde\psi_\lambda(x)-\inf_{\mathfrak a_+}\tilde\psi_\lambda\leq\tilde v_{2P}(x-\lambda\varpi_1)\leq\delta_1,~\forall x\in(\lambda\varpi_1+\Omega(\delta_1))\subset\mathfrak a_+.
\end{align}
Here $\Omega(\delta_1)$ can be taken as $$\Omega(\delta_1)=\{\tilde v_{2P}(x)= v_{2P}(x)-2\sum_{\alpha\in\Phi_+}(x)\leq\delta_1\}\cap\left(\cap_{i\geq2}\{\alpha_i(x)\geq0\}\right).$$

On the other hand, for any $\delta_0>0$, as $\lambda\to+\infty$, we have
\begin{align*}
1-e^{-2\lambda\alpha(x)}\geq{\delta_0}
\end{align*}
whenever $\alpha(x)\geq-\frac1{2\lambda}\log(1-{\delta_0}).$

Thus, we can choose a sufficiently small $\delta_0$ which does not depend on $\lambda$, so that the domain
$$\Omega_0:=(\lambda\varpi_1+\Omega(\delta_1))\cap\left(\cap_{\alpha\in\Phi_+}\{1-e^{-2\lambda\alpha(x)}\geq{\delta_0}\}\right)$$
satisfies
\begin{align}\label{0104}
\text{Vol}(\Omega_0)\geq V_0,~\forall \lambda\geq\lambda_1(\delta_1),
\end{align}
for some constant $V_0>0$.
By \eqref{F(u)}, \eqref{0102+} and \eqref{0104}, we have
\begin{align*}
\mathcal F(u_\lambda)&\leq-\log\int_{(\lambda\varpi_1+\Omega_0)}e^{-\delta_1}({\delta_0})^{n-r}dx=-\log(V_0e^{-\delta_0}\delta_0^{n-r}),~\lambda\to+\infty.
\end{align*}

Combining the above estimate with \eqref{L(u)}, we see that
\begin{align}\label{0106}
\mathcal D(u_\lambda)&\leq\frac12|\alpha_1|^2c_1\lambda+C_0,~\lambda\to+\infty,
\end{align}
where $C_0=-\log(V_0e^{-\delta_0}\delta_0^{n-r})$. The right-hand side goes to $-\infty$ since $c_1<0$. A contradiction.

The proof of item (2) goes in a similar way. Suppose that \eqref{0109} is not true, we may assume in \eqref{0110} that $c_1\leq0$. It then follows that \eqref{0106} contradicts with \eqref{0108}.
\end{proof}

\begin{rem}\label{rmk-4.2}
The condition $\mathbf {b}(2P_+)\in\mathfrak a_{ss}$ is equivalent to that the Futaki invariant of the compactification vanishes. This is because if $u_0\in\mathcal E^1_{K\times K}(2P)$ whose Legendre function corresponds to a K\"ahler metric $\omega_0\in2\pi c_1(M)$ and $\theta\in\mathfrak a\cap\mathfrak {z(g)}$, then $\omega_{\phi_\lambda}:=(e^{\lambda\xi})^*\omega_0$ is a family of $K\times K$-invariant metrics. Following the argument of \cite[Lemma 4.1]{LZ}, it is easy to see that
$$Fut(\xi)=\left.\frac{d}{d\lambda}\right|_{\lambda=0}F(\phi_\lambda)=\mathbf{b}(2P_+)(\xi).$$




\end{rem}

\begin{rem}
It is showed in \cite[Section 6]{LTZ2} that $\mathcal D(\cdot)$ is convex along any linear path in $\mathcal E_{K\times K}^1(2P)$.
Thus when $M$ admits a (singular) K\"ahler-Einstein metric $\omega_0$ (whose symplectic potential is $u_0$), then $$\mathcal D(u)\geq \mathcal D(u_0).$$
From Proposition \ref{stab-bary} (1) and \cite{Del3}, we conclude that $M$ is K-semistable. This is proved for a general $\mathbb Q$-Fano variety by Berman \cite{Berman-inve}.
\end{rem}

Now we prove  Theorem \ref{thm-1.2}. We first show the existence of $K\times K$-invariant geodesics in $\mathcal E^1(M,-K_M)$. This is a special case of the following lemma:
\begin{lemm}\label{geodesic-invariant}
Let $\hat K\subset \text{Aut}(M)$ be a compact group. Suppose that $\phi_0,\phi_1\in \mathcal E^1_{\hat K}(M,K_M^{-1})$ are two $\hat K$-invariant K\"ahler potentials and $\{\phi(x,t)\}_{t\in\{\mathbb C|\text{Re}(z)\in[0,1]\}}\subset\mathcal E^1(M,K_M^{-1})$ is a geodesic connecting them. Then for every $t$, the function $\phi(\cdot,t):M\to\mathbb C$ is $\hat K$-invariant.
\end{lemm}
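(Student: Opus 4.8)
The plan is to exploit the uniqueness of geodesics connecting two fixed endpoints in $\mathcal E^1(M,K_M^{-1})$, together with the fact that the geodesic equation and the endpoints are invariant under the $\hat K$-action. Concretely, recall that a geodesic $\{\phi(\cdot,t)\}$ in $\mathcal E^1(M,K_M^{-1})$ is, by definition, the solution of a homogeneous complex Monge--Amp\`ere equation on $M\times A$ (where $A=\{z\in\mathbb C\mid \mathrm{Re}(z)\in[0,1]\}$, the problem being $S^1$-invariant in the imaginary direction) with boundary values $\phi_0$ and $\phi_1$ on the two boundary circles; equivalently, by the work of Berndtsson and Darvas, it is the upper envelope of subgeodesics with those boundary data, and such a solution is unique. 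This is the analytic input I would take as known (it is implicit in the $\mathcal E^1$-theory of \cite{BBEGZ, DT} cited above).

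The key steps, in order, are as follows. First, fix $g\in\hat K$; since $g$ is a biholomorphism of $M$ fixing $2\pi c_1(M)$ and $\omega_0$ may be chosen $\hat K$-invariant (the Fubini--Study-type metric $\omega_0$ can be averaged over the compact group $\hat K$), pullback by $g$ acts on potentials by $\phi\mapsto g^*\phi:=\phi\circ g+\rho_g$ for the appropriate $\rho_g$ making $\omega_{g^*\phi}=g^*\omega_\phi$; in fact with an $\hat K$-invariant $\omega_0$ one simply has $g^*\phi=\phi\circ g$. Second, observe that $t\mapsto g^*\phi(\cdot,t)$ is again a geodesic in $\mathcal E^1(M,K_M^{-1})$: the homogeneous Monge--Amp\`ere equation on $M\times A$ is preserved by the fibrewise biholomorphism $g\times\mathrm{id}$, and the class $\mathcal E^1$ is preserved since $I(g^*\phi)=I(\phi)$ and full MA-mass is a biholomorphic invariant. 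Third, evaluate the endpoints: $g^*\phi(\cdot,0)=g^*\phi_0=\phi_0$ and $g^*\phi(\cdot,1)=g^*\phi_1=\phi_1$ because $\phi_0,\phi_1$ are $\hat K$-invariant by hypothesis. Fourth, invoke uniqueness of the geodesic with prescribed endpoints to conclude $g^*\phi(\cdot,t)=\phi(\cdot,t)$ for all $t$, i.e. $\phi(\cdot,t)$ is $g$-invariant; since $g\in\hat K$ was arbitrary, $\phi(\cdot,t)$ is $\hat K$-invariant for every $t$.

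I expect the main obstacle to be a bookkeeping issue rather than a deep one: namely, making precise how $\hat K\subset\mathrm{Aut}(M)$ acts on the space of K\"ahler potentials in the singular/unbounded setting, and verifying that this action indeed preserves $\mathcal E^1(M,K_M^{-1})$ and the (Berndtsson--Darvas) geodesic segments. One must check that $\omega_0$ can be taken $\hat K$-invariant (average the metric, or equivalently average the embedding into $\mathbb{CP}^N$ by the compact group $\hat K$ acting on $H^0(M,K_M^{-\ell})$), so that the action on potentials is the simple one $\phi\mapsto\phi\circ g$; then $g^*$ clearly maps $\mathrm{PSH}(M,\omega_0)$ to itself, preserves $\sup_M$, full MA-mass, and $I(\cdot)$, hence preserves $\mathcal E^1$. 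One also needs that the defining property of the geodesic—being the supremum of all subgeodesics with the given boundary data, or equivalently solving the relevant homogeneous MA equation with those boundary data—transforms correctly under $g\times\mathrm{id}$ on $M\times A$; this is immediate once the setup is in place. A mild subtlety worth a sentence is that the geodesic is only unique up to the obvious $\mathrm{Im}(t)$-rotation/constant ambiguities, but since we are comparing two geodesics with \emph{identical} boundary data $\phi_0,\phi_1$ there is genuine uniqueness, so the argument closes.
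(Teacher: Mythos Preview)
Your argument is correct and rests on the same idea as the paper's: the geodesic is the Perron envelope of subgeodesics with the given $\hat K$-invariant boundary data, and the $\hat K$-action permutes the class of competitors while fixing the boundary constraints, so the envelope is $\hat K$-invariant. The paper executes this more directly by manipulating the defining $\sup$ (substituting $\tilde\phi\mapsto\tilde\phi\circ\hat k$) rather than phrasing it as ``$g^*\phi$ is again a geodesic with the same endpoints, hence equals $\phi$ by uniqueness''; this sidesteps the bookkeeping you flagged about preservation of $\mathcal E^1$ and the HCMA equation, but the content is the same.
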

\begin{proof}
By definition \cite[Section 4.2]{BBEGZ},
\begin{align*}
\phi(x,t)=\sup_{\tilde \phi}\{\tilde\phi(x,t)|\tilde\phi|_{\text{Re}(t)=0}(z,t)\leq\phi_0(z),\tilde\phi|_{\text{Re}(t)=1}(z,t)\leq\phi_1(z),~\forall z\in M\},
\end{align*}
where $\tilde \phi$ is a family of continuous $\omega_0$-psh functions. Thus for any $\hat k \in \hat K$
\begin{align*}
\phi(\hat kx,t)&=\sup_{\tilde \phi}\{\tilde\phi(\hat kx,t)|\tilde\phi|_{\text{Re}(t)=0}(z,t)\leq\phi_0(z),\tilde\phi|_{\text{Re}(t)=1}(z,t)\leq\phi_1(z)\}\\
&=\sup_{\tilde \phi}\{\tilde\phi(x,t)|\tilde\phi|_{\text{Re}(t)=0}(\hat k^{-1}z,t)\leq\phi_0(z),\tilde\phi|_{\text{Re}(t)=1}(\hat k^{-1}z,t)\leq\phi_1(z)\}\\
&=\sup_{\tilde \phi}\{\tilde\phi(x,t)|\tilde\phi|_{\text{Re}(t)=0}(\hat k^{-1}z,t)\leq\phi_0(\hat k^{-1}z),\tilde\phi|_{\text{Re}(t)=1}(\hat k^{-1}z,t)\leq\phi_1(\hat k^{-1}z)\}\\
&=\phi(x,t).
\end{align*}
Hence $$\{\phi(x,t)\}_{t\in\{\mathbb C|\text{Re}(z)\in[0,1]\}}\subset\mathcal E^1_{\hat K}(M,K_M^{-1})$$ as desired.
\end{proof}

Also, we have
\begin{lemm}
Let $\phi_1,\phi_2\in\mathcal E_{K\times K}^1(M,-K_M)$ and $u_1,u_2$ be the Legendre functions of $\psi_{\phi_1},\psi_{\phi_2}$, respectively. Then
\begin{align}\label{geodesic-eq}
u_t=(1-t)u_0+tu_1
\end{align}
is a geodesic joining $u_1,u_2$ and the distance
$$d_{\mathcal E^1}(\phi_1,\phi_2):=\int_{2P_+}|u_1-u_2|\pi dy.$$
\end{lemm}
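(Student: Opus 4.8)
The statement has two parts: that the affine path $u_t=(1-t)u_0+tu_1$ is a geodesic in $\mathcal E^1_{K\times K}(2P)$, and that the $\mathcal E^1$-distance between the corresponding K\"ahler potentials equals $\int_{2P_+}|u_1-u_2|\pi\,dy$. My plan is to transport both assertions from the Mabuchi geodesic picture on $M$ through the Legendre correspondence of Theorem \ref{E1-legendre}. The first thing I would do is recall the standard fact (going back to the toric computation of Guan, and in the group-compactification setting already used in \cite{LTZ2}) that under the Legendre transform, the weak geodesic $\phi(x,t)$ on $M$ connecting $\phi_1,\phi_2$ corresponds on the polytope side to \emph{linear} interpolation of the symplectic potentials: $u_t=(1-t)u_1+tu_2$. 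Concretely, one writes $\psi_{u_t}(x)=\sup_{y\in 2P}\{\langle x,y\rangle - u_t(y)\}$ and checks that the function $\Psi(x,t)=\psi_{u_t}(x)$, viewed as an $S^1$-invariant potential on $G\times(\text{strip})$, is the Perron--Bremermann envelope defining the weak geodesic; the key point is that $\Psi$ is already plurisubharmonic in $(x,t)$ jointly because $u_t$ is affine in $t$ (so the Legendre transform in $x$ is convex in $(x,t)$), and it has the right boundary values $\psi_{u_1},\psi_{u_2}$ at $t=0,1$, while maximality follows because any competitor is dominated after taking Legendre transforms. One has to be a little careful that Lemma \ref{geodesic-invariant} guarantees the genuine Mabuchi geodesic stays $K\times K$-invariant, hence descends to a convex path on $2P$, and then invoke that a convex, affine-at-the-endpoints path which is itself a subgeodesic and has the extremal property is \emph{the} geodesic; I would cite \cite{BBEGZ} for the $\mathcal E^1$-geodesic theory and the toric/compactification reduction from \cite{LTZ2, LZ}.

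For the distance formula, the plan is to combine two ingredients. First, the general Darvas $L^1$-Finsler formula: along the $\mathcal E^1$-geodesic $\phi(\cdot,t)$ one has $d_{\mathcal E^1}(\phi_1,\phi_2)=\frac1V\int_M|\dot\phi(\cdot,0)|\,\omega_{\phi_1}^n$, and more generally the length is computed by integrating $\dot\phi$ against the Monge--Amp\`ere measure. Second, the change-of-variables identity already in the paper's toolbox: for a $K\times K$-invariant potential with symplectic potential $u$, the pushforward of $\omega_\phi^n$ under the moment map is (up to the normalizing constant and the factor $V$) the measure $\pi(y)\,dy$ on $2P_+$, while the time-derivative $\dot\phi$ pulls back, under Legendre duality, to $-\dot u_t = u_1-u_2$ on the polytope. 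Putting these together, $d_{\mathcal E^1}(\phi_1,\phi_2)=\frac1V\int_{2P_+}|u_1-u_2|\,\pi\,dy$ up to the normalization of $V$; I would either absorb $V$ into the stated definition or note that the paper's convention for $d_{\mathcal E^1}$ already carries it, matching the displayed formula. The $W$-invariance of everything lets one integrate over $2P_+$ rather than all of $2P$ at the cost of the $|W|$ factor, which is likewise hidden in $V$ and $\pi$.

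The main obstacle, I expect, is not any single deep estimate but rather the bookkeeping of \emph{which} weak objects are being compared and that the linear path actually \emph{is} the geodesic rather than merely a subgeodesic: one must verify the maximality/extremal characterization carefully in the possibly-unbounded $\mathcal E^1$ setting, where $u_t$ need not be smooth or strictly convex and $\psi_{u_t}$ may be unbounded near $\partial 2P$. Here I would lean on Theorem \ref{E1-legendre} to stay inside $\mathcal E^1_{K\times K}(2P)$ for every $t$ (convexity of the class under affine interpolation is immediate, and the finiteness $\int_{2P_+}u_t\pi\,dy<\infty$ follows by linearity from the endpoints), and then quote the uniqueness of $\mathcal E^1$-geodesics with given endpoints \cite{BBEGZ, DT} together with Lemma \ref{geodesic-invariant} to identify the two. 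The distance computation is then essentially Darvas's formula specialized via the (already-established in \cite{LTZ2}) dictionary between Monge--Amp\`ere mass on $M$ and the weighted Lebesgue measure $\pi\,dy$ on the polytope; the only genuine care needed is justifying the differentiation under the integral sign and the change of variables at the level of measures with full Monge--Amp\`ere mass, for which the no-mass-on-pluripolar-sets property quoted in Section 2.2 suffices.
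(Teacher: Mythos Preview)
Your approach is correct but takes a different route from the paper. You plan to \emph{first} identify the affine path $u_t$ with the Perron--Bremermann envelope geodesic (by checking plurisubharmonicity in $(x,t)$ and maximality against competitors), and \emph{then} compute the distance along it via Darvas's formula. The paper instead runs a sandwich argument that sidesteps the maximality verification entirely: it takes an arbitrary $K\times K$-invariant $\mathcal E^1$-geodesic $\phi_t$ (granted by Lemma~\ref{geodesic-invariant}), passes to the polytope via \cite[Lemma 4.5]{LTZ2}, and uses the pointwise inequality $\int_0^1|\dot u_t|\,dt\ge\bigl|\int_0^1\dot u_t\,dt\bigr|=|u_1-u_2|$ to get $d_{\mathcal E^1}(\phi_1,\phi_2)\ge\int_{2P_+}|u_1-u_2|\pi\,dy$; then it simply observes that the linear path has length exactly $\int_{2P_+}|u_1-u_2|\pi\,dy$, which bounds the distance from above. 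Equality of the two bounds gives both the distance formula and the geodesic property of the affine path at once. The advantage of the paper's argument is that it never needs to verify maximality in the envelope characterization---precisely the step you flag as the main obstacle in the unbounded $\mathcal E^1$ setting---so it is shorter and less delicate. Your approach, by contrast, is more conceptual (it really explains \emph{why} the linear path is the geodesic rather than deducing it a posteriori from length-minimization) and would transfer more readily to settings where one does not have such a clean two-sided estimate.
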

\begin{proof}From Lemma \ref{geodesic-invariant}, we can take a $K\times K$-invariant geodesic $\phi_t$ such that
$$d_{\mathcal E^1}(\phi_1,\phi_2)=\int_0^1\int_M|\dot\phi_t|\omega_{\phi_t}^n\wedge dt.$$
By \cite[Lemma 4.5]{LTZ2}, we have
\begin{align*}
d_{\mathcal E^1}(\phi_1,\phi_2)&=\int_0^1\int_{\mathfrak a_+}|\dot\phi_t|\text{MA}_{\mathbb R;\pi}(\phi_t)\wedge dt\\
&=\int_0^1\int_{2P_+}|\dot u_t|\pi dy\wedge dt\\
&\geq\int_{2P_+}|\int_0^1\dot u_t dt|\pi dy=\int_{2P_+}|u_1-u_2|\pi dy.
\end{align*}
On the other hand, for the path \eqref{geodesic-eq}, it holds
$$\int_{2P_+}|u_1-u_2|\pi dy=\int_0^1\int_{2P_+}|\dot u_t|\pi dy\wedge dt\geq d_{\mathcal E^1}(\phi_1,\phi_2).$$
We conclude the lemma.
\end{proof}

\begin{proof}[Proof of Theorem \ref{thm-1.2}]
We first show (1). We will use an argument of \cite[Section 3]{DR}. Suppose that the Legendre function $\psi_0$ of some $u_0\in\mathcal E_{K\times K}^1(2P)$ defines a (singular) K\"ahler-Einstein metric $\omega_0=\sqrt{-1}\partial\bar\partial\psi_0$ on $M$. By \cite[Lemma 6.4]{LTZ2}, we see that $\mathcal D(\cdot)$ is convex along the path \eqref{geodesic-eq} for any $u_0,u_1$ in $\mathcal E^1_{K\times K}(2P_+)$. On the other hand, by \cite[Claim 6.7]{LTZ2}, $u_0$ is a critical point of $\mathcal D(\cdot)$ on $\mathcal E^1_{K\times K}(2P_+)$. Thus
\begin{align}\label{0301}
\mathcal D(u)\geq\mathcal D(u_0),~\forall u\in\mathcal E^1_{K\times K}(2P_+).
\end{align}

Consider the ratio
\begin{align}\label{+110302}
C:=\inf\{\frac{\mathcal D(u)-\mathcal D(u_0)}{d(u_0,u)}|u\in\mathcal E_{K\times K}^1(2P),~d(u_0,u)\geq1\},
\end{align}
where $$d(u_0,u)=\int_{2P_+}|u-u_0|\pi dy.$$
It suffice to prove that the constant $C$ in \eqref{+110302} is positive.

Suppose that $C=0$. Then we can find a sequence $\{u_k\}_{k\in\mathbb N_+}$ such that
\begin{align}\label{assump-contra}
\lim_{k\to+\infty}\frac{\mathcal D(u_k)-\mathcal D(u_0)}{d(u_0,u_k)}=0.
\end{align}
For each $k\in\mathbb N_+$, consider the path
\begin{align}\label{0305}
u_k(t)=(1-t)u_0+tu_k,~t\geq0.
\end{align}
Since for each $k$, $\mathcal D(u_k(t))$ is convex on $[0,+\infty)$. By \eqref{0301} we have
\begin{align}\label{0302}
0\leq\frac{\mathcal D(u_k(t))-\mathcal D(u_0)}{t}\leq\frac{\mathcal D(u_k)-\mathcal D(u_0)}{d(u_0,u_k)}.
\end{align}
Take $\hat u_k=u_k(1)$. Then
\begin{align*}
{d(u_0,\hat u_k)}=1,
\end{align*}
and by \eqref{0302},
\begin{align*}
0\leq{\mathcal D(\hat u_k)-\mathcal D(u_0)}\leq\frac{\mathcal D(u_k)-\mathcal D(u_0)}{d(u_0,u_k)}.
\end{align*}
By \cite[Proposition 6.5]{LTZ2}, $\mathcal D(\cdot)$ is semi-continuous on $\mathcal E_{K\times K}^1(2P)$. Thus by \eqref{0305}, up to a subsequence,
$$\hat u_k\to\hat u_0,~k\to+\infty$$
for some $\hat u_0\in\mathcal E_{K\times K}^1(2P)$ which also gives a (singular) K\"ahler-Einstein metric $\hat\omega_0$ on $M$. Also,
\begin{align}\label{dist-two-metrics}
d(\hat u_0,u_0)=d_{\mathcal E^1}(\hat\psi_0,\psi_0)=1.
\end{align}

On the other hand, by Theorem \ref{uniqueness-T}, there is some $\sigma\in Z(G)$ such that
$\hat\omega_0=\sigma^*\omega_0$. Hence the corresponding Legendre function $\hat u_0$ of $\hat \psi_0=\sigma^*\psi_0$ is $$\hat u_0=u_0-Z_\sigma^iy_i,$$
where $\sigma=e^{Z_\sigma}$ for some $Z_\sigma=(Z_\sigma^1,...,Z_\sigma^n)\in\mathfrak {z(g)}$. Since $u_0\in\mathcal E^1_{K\times K}(2P)$ is normalized at $O\in2P$, the assumption that $\hat u_0\in\mathcal E^1_{K\times K}(2P)$ implies $$Z_\sigma=0.$$ Hence $\hat u_0=u_0$, which contradicts to \eqref{dist-two-metrics}. We see that \eqref{assump-contra} can not hold and Theorem \ref{thm-1.2} (1) is true.

Since $M$ admits a (singular) K\"ahler-Einstein metric, it must have vanishing Futaki invariant (see Remark \ref{rmk-4.2} above) and hence $\mathbf{b}(2P_+)\in\mathfrak a_{ss}$.
The relation \eqref{0109} then follows from Proposition \ref{stab-bary} (2). The stability result follows from \eqref{0109} and a result in \cite{Del3}.

\end{proof}


\section{Appendix 1: An algebraic lemma}

In this appendix, we prove an elementary algebraic lemma for reductive groups.

\begin{lemm}\label{centralizer}
Suppose that $K$ is a compact, connected Lie group and $G=K^\mathbb C$ be its complexification. Let $T$ be a maximal torus of $K$ and $K'$ be a Lie subgroup of $K$. Suppose that there is a $t\in T^\mathbb C$ such that $\text{Ad}_tK'\subset K$. Then $t\in (C_G(K')\cap T^\mathbb C)\cdot T.$
\end{lemm}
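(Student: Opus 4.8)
The plan is to reduce the statement to a computation inside the complex torus $T^{\mathbb C}$ by exploiting the polar (KAK) decomposition of $T^{\mathbb C}$. Write $\mathfrak t = \operatorname{Lie}(T)$ and $\mathfrak a = J_G\mathfrak t$, so that $T^{\mathbb C} = T\cdot\exp(\mathfrak a)$ and every $t\in T^{\mathbb C}$ factors uniquely as $t = t_0\cdot\exp(X)$ with $t_0\in T$, $X\in\mathfrak a$. Since $T\subset K$, the hypothesis $\operatorname{Ad}_tK'\subset K$ is equivalent to $\operatorname{Ad}_{\exp(X)}K'\subset K$, so after replacing $t$ by $\exp(X)$ we may assume $t=\exp(X)$ is a positive-definite element. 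The goal then becomes: $\exp(X)\in C_G(K')\cdot T$, and in fact I expect to show the stronger statement $\exp(X)$ itself centralizes $K'$, i.e. $X$ centralizes $\operatorname{Lie}(K')$ (so the $T$-factor is trivial in this normalized situation).

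The key step is the following rigidity observation. Fix $g\in K'$ and consider the curve $s\mapsto \operatorname{Ad}_{\exp(sX)}g$ for $s\in\mathbb R$. By hypothesis (applied with $t^s:=\exp(sX)$, which is again a positive-definite element of $T^{\mathbb C}$, and noting that $\operatorname{Ad}_{\exp(sX)}$ still sends $K'$ into $K$ for \emph{all} $s$ — here one uses that $\operatorname{Ad}_{\exp(X)}K'\subset K$ forces $\operatorname{Ad}_{\exp(X)}K'=K''$ for some compact subgroup $K''\subset K$ of the same dimension, and the one-parameter nature of the flow) the curve stays in the compact set $K$. Now decompose $\operatorname{Lie}(G)=\mathfrak k\oplus J_G\mathfrak k$ into the Cartan decomposition; writing $g=\exp(Y)$ near the identity with $Y\in\mathfrak k$, the element $\operatorname{Ad}_{\exp(sX)}Y$ must remain in $\mathfrak k$ for all $s$. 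Expanding in the eigenbasis of $\operatorname{ad}_X$ (which has real eigenvalues since $X\in\mathfrak a$ and $\operatorname{ad}_X$ is self-adjoint for a suitable invariant inner product), $\operatorname{Ad}_{\exp(sX)}Y=\sum_\lambda e^{s\lambda}Y_\lambda$; a sum of real exponentials can lie in the fixed proper real subspace $\mathfrak k$ for all $s$ only if each $Y_\lambda$ with $\lambda\neq0$ already lies in $\mathfrak k\cap(\text{its own }e^{s\lambda}\text{-line projection})$, and a short argument forces $Y_\lambda=0$ for $\lambda\neq 0$, i.e. $[X,Y]=0$. Since this holds for all $Y\in\operatorname{Lie}(K')$, we get $[X,\operatorname{Lie}(K')]=0$, hence $\exp(X)\in C_G(K')$.

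Undoing the normalization, the original $t$ equals $t_0\cdot\exp(X)$ with $\exp(X)\in C_G(K')\cap T^{\mathbb C}$ and $t_0\in T$, which is exactly $t\in (C_G(K')\cap T^{\mathbb C})\cdot T$ as claimed. The main obstacle I anticipate is making the "sum of real exponentials trapped in a proper subspace" argument fully rigorous: one must be careful that $\operatorname{Ad}_{\exp(sX)}$ genuinely maps $\operatorname{Lie}(K')$ into $\mathfrak k$ for all real $s$ and not merely $s=1$, which requires leveraging that the flow $\{\exp(sX)\}$ is connected through the identity together with the structure of $K'$ as a compact subgroup; one clean way is to replace $K'$ by the closure of the group generated by $\{\operatorname{Ad}_{\exp(sX)}K'\}_{s}$ and use that conjugation by a positive element cannot increase compactness. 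Alternatively, one can phrase the whole argument representation-theoretically: embed $G\hookrightarrow\operatorname{GL}(N,\mathbb C)$ with $K\subset U(N)$, so $\exp(X)$ becomes a positive-definite Hermitian matrix, $\operatorname{Ad}_{\exp(X)}K'\subset U(N)$ says $\exp(X)K'\exp(-X)$ consists of unitary matrices, and then the standard fact that $hUh^{-1}\subset U(N)$ for $h$ positive-definite and $U$ a connected subgroup of $U(N)$ forces $h$ to commute with $U$ finishes it directly.
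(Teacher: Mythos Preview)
Your overall strategy---write $t=t_0\exp(X)$ with $t_0\in T$ and $X\in\mathfrak a=J_G\mathfrak t$, reduce to showing $\exp(X)\in C_G(K')$---is exactly the paper's. The paper, however, never needs the claim that $\operatorname{Ad}_{\exp(sX)}\mathfrak k'\subset\mathfrak k$ for \emph{all} $s$: it computes $\operatorname{Ad}_t$ explicitly on the real root-space basis $\{X_\alpha-X_{-\alpha},\,J(X_\alpha+X_{-\alpha})\}$ of $\mathfrak k$ and finds that, for $Y\in\mathfrak k$ with $\alpha$-coefficients $(c_\alpha,d_\alpha)$, the component of $\operatorname{Ad}_tY$ outside $\mathfrak k$ is $2\sinh\alpha(T_2)$ times a nondegenerate rotation of $(c_\alpha,d_\alpha)$, where $t=e^{T_1+JT_2}$. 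Hence $\operatorname{Ad}_t\mathfrak k'\subset\mathfrak k$ at the single value $s=1$ already forces $\alpha(T_2)=0$ for every root $\alpha$ appearing in $\mathfrak k'$, and so $e^{JT_2}\in C_G(K')\cap T^{\mathbb C}$.

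Your alternative representation-theoretic argument is correct and is the coordinate-free version of that same computation: with $K\subset U(N)$ and $h=\exp(X)$ positive Hermitian, unitarity of $huh^{-1}$ for $u\in K'$ gives $u^*h^2u=h^2$, so $h^2$ (and then $h$ by spectral calculus) commutes with $K'$; this is precisely the $\sinh\alpha(T_2)=0$ condition read invariantly, and it too uses only $s=1$. By contrast, your primary ``sum of real exponentials trapped in $\mathfrak k$ for all $s$'' line genuinely does not follow from the hypothesis, and the fix you sketch (passing to the group generated by all $\operatorname{Ad}_{\exp(sX)}K'$) does not obviously yield a compact group without already knowing the conclusion. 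So drop that route and lead with the $h^2$ argument; it is shorter than the paper's explicit calculation and proves the same thing.
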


\begin{proof}

Recall the Cartan decomposition
\begin{align}\label{crtan-dec}
\mathfrak g=\mathfrak t^\mathbb C\oplus\left(\otimes_{\alpha\in\Phi_+}(V_\alpha\oplus V_{-\alpha})\right),
\end{align}
where $\Phi_+$ is a set of positive roots corresponding to $(G,T^\mathbb C)$. Furthermore, for each root $\beta\in\Phi$, there is a $X_\beta$ such that
$$V_\beta=\mathbb C\cdot X_\beta.$$
Denote by $J$ the complex structure of $G$, we have
$$\mathfrak k=\mathfrak t\oplus(\oplus_{\alpha\in\Phi_+}\mathfrak k_\alpha),$$
where $$\mathfrak t=\text{Span}_{\mathbb R}\{E_1,...,E_r\}$$ and
$$\mathfrak k_\alpha=\text{Span}_{\mathbb R}\{(X_\alpha-X_{-\alpha}),J(X_\alpha+X_{-\alpha})\}.$$

Let $$t=e^{T_1+JT_2},$$ where $T_i\in\mathfrak t$. Then for any root $\beta\in\Phi$,
\begin{align*}
\text{Ad}_t\left(\begin{aligned}& X_\beta\\&JX_{\beta}\end{aligned}\right)=\left(\begin{aligned}& e^{-\beta(T_2)}\cos\beta(T_1)&e^{-\beta(T_2)}\sin\beta(T_1)\\-&e^{-\beta(T_2)}\sin\beta(T_1)&e^{-\beta(T_2)}\cos\beta(T_1)\end{aligned}\right)\left(\begin{aligned}& X_\beta,\\&JX_{\beta}\end{aligned}\right).
\end{align*}
Thus for any root $\beta\in\Phi_+$,
\begin{align*}
\text{Ad}_t(X_\beta-X_{-\beta})=&e^{-\beta(T_2)}[\cos\beta(T_1)(X_\beta-X_{-\beta})+\sin\beta(T_1)J(X_\beta+X_{-\beta})]\notag\\
&-2\sinh\beta(T_2)(\cos\beta(T_1)X_{-\beta}-\sin\beta(T_1)JX_{-\beta}),
\end{align*}
and
\begin{align*}
\text{Ad}_t J(X_\beta+X_{-\beta})=&e^{\beta(T_2)}[\sin\beta(T_1)(-X_\beta+X_{-\beta})+\cos\beta(T_1)J(X_\beta+X_{-\beta})]\notag\\
&+2\sinh\beta(T_2)(\sin\beta(T_1)X_{\beta}-\cos\beta(T_1)JX_{\beta}).
\end{align*}

Hence we get for any
$$X=\sum_{j=i}^ra_jE_j+\sum_{\alpha\in\Phi_+}(c_\alpha(X_\alpha-X_{-\alpha})+d_\alpha J(X_\alpha+X_{-\alpha})),$$
where $a_i,c_\alpha,d_\beta\in\mathbb R$, we get
\begin{align*}
\text{Ad}_tX\equiv2\sinh\beta(T_2)&\left[\sum_{\beta\in\Phi_+}(-c_\beta\cos\beta(T_1)+d_\beta\sin\beta(T_1))X_{-\beta}\right.\\
&+\left.\sum_{\beta\in\Phi_+}(c_\beta\sin\beta(T_1)+d_\beta\cos\beta(T_1))JX_{-\beta}\right] ~({\rm mod}\mathfrak k).
\end{align*}

Then we conclude that $X\in\mathfrak k$ if and only if 
$$\sinh\beta(T_2)\left(\begin{aligned}-&\cos\beta(T_1)&&\sin\beta(T_1)\\&\sin\beta(T_1)&&\cos\beta(T_1)\end{aligned}\right)\left(\begin{aligned}&c_\beta\\&d_\beta\end{aligned}\right)=\left(\begin{aligned}&0\\&0\end{aligned}\right),~\forall \beta\in\Phi_+.$$
Thus $X\in\mathfrak t$ if and only if
\begin{align}\label{lie-coe}
\sinh\beta(T_2)(c_\beta,d_\beta)=(0,0).
\end{align}

By the assumption that $\text{Ad}_tK'\subset K$, we see the Lie algebra $\text{Ad}_t\mathfrak t'\subset \mathfrak k$. If the decomposition of $\mathfrak k'$ according to \eqref{crtan-dec} has non-zero component on some $\mathfrak k_\alpha\oplus\mathfrak k_{-\alpha}$, then we can take $X\in\mathfrak k'$ such that $(c_\alpha,d_\alpha)\not=(0,0)$ in \eqref{lie-coe}. Hence $\alpha(T_2)=0$ for any such $\alpha$. We conclude that $e^{JT_2}\in C_G(K')$, which proves the lemma.

\end{proof}

\section{Appendix 2: Maximal torus in $\rm{Aut}(M)$}

In this Appendix, we will prove that when $\rm{Aut}(M)$ is reductive, the maximal torus of $\rm{Aut}(M)$ containing $T\times T$ is unique. We learned this result and its proof from Professor M. Brion \cite{Brion-letter}.

\begin{prop}\label{max-tor-unique}
Let $M$ be a polarized $G$-compactification such that ${\rm Aut}^0(M)$ is reductive. Then the maximal torus containing $T\times T$ (as a subgroup of ${\rm Aut}^0(M)$) is unique.
\end{prop}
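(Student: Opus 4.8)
The plan is to exploit the structure theory of $G$-compactifications, in particular the description of $\mathrm{Aut}^0(M)$ that goes back to Blanchard and is made explicit for wonderful/toroidal compactifications in work of Brion, Bien--Brion, and Timash\"ev. The key point is that for a polarized $G$-compactification, $\mathrm{Aut}^0(M)$ contains $G\times G$ acting through the two-sided action, and any maximal torus of $\mathrm{Aut}^0(M)$ containing $T\times T$ must contain the whole centralizer of $T\times T$ in a maximal torus, together with the identity component of that centralizer. So first I would recall that $T\times T$ is itself a maximal torus of $G\times G$, hence a maximal torus of the subgroup $G\times G\subset \mathrm{Aut}^0(M)$; the question is whether it can be enlarged inside the possibly bigger reductive group $\mathfrak G:=\mathrm{Aut}^0(M)$, and whether such an enlargement is unique.

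The main structural input I would use is that $\mathfrak G$ normalizes the open $G\times G$-orbit (being connected, it preserves the boundary divisor), hence acts on $G\cong (G\times G)/\mathrm{diag}(G)$, and more precisely that the extra automorphisms beyond $G\times G$ come from the center: concretely, $C_{\mathfrak G}(G\times G)$ is a torus contained in $Z(G)$ acting by translations (this is essentially \cite[Proposition 1.8]{Timashev-book}, cited already in the proof of Theorem \ref{uniqueness-T}, and the companion \cite[Proposition 1.8]{Timashev-book}/Bien--Brion type results). Given this, I would argue as follows. Let $\mathfrak T$ be any maximal torus of $\mathfrak G$ with $T\times T\subset\mathfrak T$. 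Since $\mathfrak T$ is abelian and contains $T\times T$, we have $\mathfrak T\subset C_{\mathfrak G}(T\times T)$. Now decompose $C_{\mathfrak G}(T\times T)$: its intersection with $G\times G$ is exactly $T^{\mathbb C}\times T^{\mathbb C}$ (the centralizer of a maximal torus in a connected reductive group is that torus), and modulo $G\times G$ the centralizer maps into $C_{\mathfrak G}(G\times G)$ up to the finite piece $N_{G\times G}(T\times T)/(T^{\mathbb C}\times T^{\mathbb C})$ being trivial on the identity component. Hence $C_{\mathfrak G}(T\times T)^0 = (T^{\mathbb C}\times T^{\mathbb C})\cdot C_{\mathfrak G}(G\times G)^0$, and this is itself a torus (product of tori, with $C_{\mathfrak G}(G\times G)^0$ central hence commuting with $T^{\mathbb C}\times T^{\mathbb C}$). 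Therefore it is contained in a unique maximal torus, and any maximal torus of $\mathfrak G$ containing $T\times T$ must equal this canonical torus $(T^{\mathbb C}\times T^{\mathbb C})\cdot C_{\mathfrak G}(G\times G)^0$ — uniqueness follows.

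More carefully, the two things I must pin down are: (i) that $C_{\mathfrak G}(T\times T)$ has abelian identity component, i.e. $C_{\mathfrak G}(T\times T)^0$ is a torus; and (ii) that a maximal torus $\mathfrak T\supset T\times T$ is forced to contain all of $C_{\mathfrak G}(T\times T)^0$ rather than merely be contained in it. For (ii): $\mathfrak T$ is a maximal torus of the reductive group $C_{\mathfrak G}(T\times T)^0$ as well (a maximal torus of $\mathfrak G$ lying inside a connected reductive subgroup $H$ and containing $T\times T\subset H$ is maximal in $H$); if $C_{\mathfrak G}(T\times T)^0$ is itself a torus then it has a unique maximal torus, namely itself. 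So (ii) reduces to (i). For (i), I would use the explicit description of the Lie algebra of $\mathrm{Aut}(M)$ for a $G$-compactification: $\mathfrak{aut}(M)$ decomposes under the adjoint action of $T\times T$, and the $(0,0)$-weight space (which is $\mathrm{Lie}\, C_{\mathfrak G}(T\times T)$) consists of $\mathfrak t^{\mathbb C}\oplus\mathfrak t^{\mathbb C}$ plus central directions coming from the toric boundary; the bracket on this is abelian because the only sections of the relevant sheaf that are $T\times T$-invariant are the toric/torus directions. This is exactly the kind of computation carried out in \cite{Timashev-book} and in the Bien--Brion analysis of $\mathrm{Aut}$ of toroidal varieties.

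The hard part will be making (i) precise without reproducing the full structure theory: one needs to know that there are no ``extra'' $T\times T$-invariant vector fields on $M$ beyond the torus directions and the central ones, equivalently that $H^0(M, T_M)^{T\times T}$ is abelian. I expect to handle this by restricting to the closure $Z$ of $T^{\mathbb C}$ (a toric variety) and its normalization data: a $T\times T$-invariant holomorphic vector field on $M$, restricted near the open orbit $G$, must be left-and-right-$T$-invariant, hence of the form $X + \mathrm{Ad}(g)Y$ patterns that force it into $\mathfrak t^{\mathbb C}\oplus\mathfrak t^{\mathbb C}$ plus $Z(\mathfrak g)$; extending over the boundary only cuts down further. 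Once the $(0,0)$-weight space of $\mathfrak{aut}(M)$ is shown to be abelian, the rest is the soft Lie-group argument above, and I would present it in the order: (1) reduce to showing $C_{\mathfrak G}(T\times T)^0$ is a torus; (2) identify $\mathrm{Lie}\,C_{\mathfrak G}(T\times T) = \mathfrak{aut}(M)^{T\times T}$; (3) compute this weight space using the open-orbit description and \cite[Proposition 1.8]{Timashev-book}, concluding it is abelian; (4) deduce uniqueness of the maximal torus.
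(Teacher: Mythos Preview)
Your reduction is exactly the one the paper makes: any maximal torus of $\mathfrak G=\mathrm{Aut}^0(M)$ containing $T\times T$ sits inside $H:=C_{\mathfrak G}(T\times T)$, and since $\mathfrak G$ is reductive, $H^0$ is connected reductive; hence uniqueness follows once one shows $H^0$ is a torus. Where you diverge is in how to prove this last point.

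The paper's argument is short and geometric, and avoids the structure theory of $\mathfrak{aut}(M)$ entirely. It uses one concrete fact about $G$-compactifications, namely that $M^{T\times T}$ is \emph{finite} (this is in Alexeev--Brion, \cite[Theorem 2.7]{AB2}: the $T\times T$-fixed points lie in the closed $G\times G$-orbits, which are flag varieties $G/B^- \times G/B$, and the torus fixed points there are finite). Since $H$ commutes with $T\times T$, it preserves $M^{T\times T}$; being connected, it fixes each of those finitely many points. Now take any closed $H$-orbit $Y\subset M$: by Borel's fixed point theorem the maximal torus $T\times T\subset H$ has a fixed point in $Y$, which by the above is an $H$-fixed point, so $Y$ is a single point. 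A separate lemma (proved via Sumihiro's equivariant projective embedding and decomposing the representation into $1$-dimensional versus higher-dimensional irreducibles) shows that a connected reductive group acting faithfully on a normal projective variety with every closed orbit a fixed point must be a torus. That finishes it.

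Your route instead tries to identify $H^0$ directly as $(T^{\mathbb C}\times T^{\mathbb C})\cdot C_{\mathfrak G}(G\times G)^0$ by computing the weight-zero piece of $\mathfrak{aut}(M)$. The decomposition step you sketch (``modulo $G\times G$ the centralizer maps into $C_{\mathfrak G}(G\times G)$'') is not a valid group-theoretic argument in general: there is no exact sequence $1\to G\times G\to\mathfrak G\to\mathfrak G/(G\times G)\to 1$ to push along, and an element centralizing $T\times T$ need not factor as torus-times-central without further input. To make your approach work you would genuinely need the full Bien--Brion/Timash\"ev description of $H^0(M,T_M)$ as a $G\times G$-module and then extract the $(0,0)$-weight space; this is doable for toroidal compactifications but is considerably heavier than the paper's three-line fixed-point argument, and for general (non-toroidal) polarized $G$-compactifications the description is less clean. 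Your restriction-to-$Z$ idea does not by itself control the $T\times T$-invariant vector fields on $M$, since such a field need not be tangent to $Z$.

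In short: keep your reduction, but for step (i) replace the $\mathfrak{aut}(M)$ computation by the finiteness of $M^{T\times T}$ plus Borel's fixed point theorem --- that is both simpler and complete.
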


Before the proof of Proposition \ref{max-tor-unique}, we shall first show the following lemma, which gives a characterization of torus:
\begin{lemm}\label{char-tor}
Let $H$ be a connected reductive algebraic group acting faithfully on a normal projective variety $M$.
Assume that every closed $H$-orbit is an  $H$-fixed point. Then $H$ is a torus.
\end{lemm}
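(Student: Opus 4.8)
The plan is to argue by contradiction. Suppose $H$ is connected reductive, acts faithfully on the normal projective variety $M$, every closed $H$-orbit is an $H$-fixed point, yet $H$ is not a torus. Then the derived subgroup $[H,H]$ is a nontrivial connected semisimple group, so it contains a subgroup isomorphic to $\SL_2$ or $\text{PGL}_2$; in particular $H$ contains a Borel subgroup $B$ with unipotent radical $U\neq\{1\}$. I would fix a one-parameter subgroup $\lambda\colon\mathbb{G}_m\to T_H$ (a maximal torus of $H$) that is regular, i.e. pairs nontrivially with every root.

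The key step is to use properness of $M$ together with the Borel fixed point theorem. First I would pick any point $x\in M$ and consider the orbit closure $\overline{H\cdot x}$, which is a closed (hence projective) $H$-invariant subvariety; replacing $M$ by it we may assume $H\cdot x$ is dense. A closed orbit inside $M$ exists (orbits of minimal dimension are closed), and by hypothesis it is a single fixed point $p$. Now apply the Bialynicki-Birula / limit argument: for the regular one-parameter subgroup $\lambda$, the limit $p_0:=\lim_{t\to 0}\lambda(t)\cdot x$ exists in $M$ by properness and is fixed by $\lambda$, hence (by regularity of $\lambda$, since the centralizer of $\lambda(\mathbb{G}_m)$ is $T_H$) the point $p_0$ is fixed by $T_H$. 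Then consider the $B$-orbit of $p_0$: by the Borel fixed point theorem $\overline{B\cdot p_0}$ contains a $B$-fixed point $q$. A $B$-fixed point in a projective $H$-variety has the property that $H\cdot q\cong H/P$ for a parabolic $P\supseteq B$, and this orbit is automatically closed (being isomorphic to the projective homogeneous space $H/P$). By the hypothesis, this closed orbit is a point, so $H\cdot q=\{q\}$, forcing $P=H$; but then $B=H$ as well only if $U=\{1\}$. More directly: $q$ is fixed by all of $B$, and since $H/P$ is a point we get $q$ is $H$-fixed. Now I would derive the contradiction with faithfulness: the set $M^{T_H}$ of $T_H$-fixed points is nonempty and closed, $B$ acts on it, and iterating the above shows $U$ acts trivially on a neighborhood structure forcing $U\subseteq\ker(H\curvearrowright M)$, contradicting faithfulness — the cleanest route is to show every $T_H$-fixed point is $H$-fixed, whence $U$ fixes the (nonempty) $T_H$-fixed locus pointwise, and then a standard argument (e.g. looking at the action on tangent spaces at such a fixed point, or using that a unipotent group acting on $M$ with a dense set of fixed points acts trivially) yields $U\subseteq\ker$.

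I expect the main obstacle to be making the last implication fully rigorous: namely that ``$U$ fixes enough points to be forced to act trivially on $M$.'' The honest version is: take $q\in M^{B}$ (which exists by Borel fixed point applied to the projective variety $M$ itself, not just an orbit). Then $H\cdot q$ is closed and isomorphic to $H/P_q$ for the parabolic $P_q=\stab_H(q)$, so by hypothesis $H/P_q$ is a point, i.e. $P_q=H$, i.e. $q$ is an $H$-fixed point. Thus $M^{B}\subseteq M^{H}$. Now I would run this for every Borel subgroup of $H$: since all Borels are conjugate and $\bigcup_{g\in H} gBg^{-1}$ is dense in $H$ (it contains all semisimple elements), and each $M^{gBg^{-1}}\subseteq M^{H}$, but more usefully, for a \emph{fixed} Borel $B$ with unipotent radical $U$, the locus $M^U$ is closed, nonempty (it contains $M^B$), $B$-invariant, and the induced $B/U\cong T_H$-action on $M^U$ again has a fixed point which by the above is $H$-fixed. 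The decisive point is that $U$, being unipotent and acting on the projective variety $M$ with $M^U\neq\emptyset$, and with the property that its fixed locus meets every $H$-orbit closure (via the Borel fixed point theorem in each orbit closure), must act trivially: one shows that if $u\in U$ acted nontrivially it would move some point, but then looking at $\overline{\langle u\rangle \cdot x}\cong\mathbb{A}^1$ compactified inside $M$, the boundary point is $u$-fixed, and iterating over the filtration of $U$ by root subgroups together with $T_H$-equivariance forces triviality — contradicting faithfulness. Hence $U=\{1\}$, so $H$ is solvable connected reductive, i.e. a torus.

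As a cleaner alternative I would actually organize the proof around the following single observation, which sidesteps the delicate unipotent argument: if $H$ is not a torus, pick a maximal parabolic $P\subsetneq H$; then $H/P$ is a nontrivial projective homogeneous space, so $\dim H/P\geq 1$. Choose a $B$-fixed point $q\in M$ (Borel fixed point theorem, $M$ projective); its orbit $H\cdot q\cong H/P_q$ is closed. By hypothesis $\dim H\cdot q=0$, so $P_q=H$ and $q$ is $H$-fixed. But this alone does not yet contradict anything, so I must additionally produce a point whose $H$-orbit is positive-dimensional yet whose orbit closure's minimal closed orbit is a point — which is automatic — and then derive a contradiction with faithfulness by noting that the generic $H$-orbit in $M$ must be positive-dimensional (else $H$ acts trivially, contradicting faithful) while the ``source'' of a Bialynicki-Birula decomposition for a regular $\lambda$ would have to retract $H$-equivariantly onto $M^{T_H}\subseteq M^H$, forcing the $U^-$-action (opposite unipotent radical) and hence, by conjugacy, the $U$-action to be trivial. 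I anticipate spending most of the write-up carefully justifying this Bialynicki-Birula retraction step in the possibly singular normal setting, citing the appropriate form of the theorem; that is where the real content lies.
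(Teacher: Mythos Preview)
Your proposal contains one correct and useful observation: by the Borel fixed point theorem $M$ has a $B$-fixed point $q$, and then $H\cdot q\cong H/P_q$ is projective, hence closed, hence a point by hypothesis; so $M^B\subseteq M^H$. But the passage from this to ``$U$ (or $[H,H]$) acts trivially on $M$'' is where every version of your argument breaks down. The claim that the Bia\l ynicki--Birula retraction for a regular one-parameter subgroup $\lambda$ is $H$-equivariant is simply false: the map $x\mapsto\lim_{t\to 0}\lambda(t)\cdot x$ is $T_H$-equivariant and $U$-\emph{invariant} (it collapses $U$-orbits, since $\lambda(t)u\lambda(t)^{-1}\to e$), but it does not intertwine the $H$-action. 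Knowing that a $U$-invariant map lands in $M^{T_H}$ (or even in $M^H$) tells you only that $U$-orbits lie in its fibres, not that $U$ acts trivially --- think of $\mathbb G_a$ acting by translation on $\mathbb A^1$ over a point. You also have not established $M^{T_H}\subseteq M^H$: a $T_H$-fixed point need not be $B$-fixed, so your earlier step does not apply to it. The difficulty you anticipate is not the singular versus smooth setting of Bia\l ynicki--Birula; the argument fails already for smooth $M$.

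The paper's proof bypasses all of this with a linearization argument. By Sumihiro's theorem one embeds $M$ $H$-equivariantly into some $\mathbb P(V)$ and splits $V=V'\oplus V''$ with $V'$ the sum of all one-dimensional $H$-subrepresentations. The hypothesis forces $M\cap\mathbb P(V'')=\emptyset$ (a closed $H$-orbit in that intersection would be a fixed point, giving a one-dimensional subrepresentation inside $V''$, a contradiction), so the linear projection $\mathbb P(V)\dashrightarrow\mathbb P(V')$ restricts to a \emph{finite} morphism $f\colon M\to\mathbb P(V')$. Since $[H,H]$ is connected and acts trivially on $V'$, finiteness of $f$ forces $[H,H]$ to act trivially on $M$; faithfulness then gives $[H,H]=\{e\}$, so $H$ is a torus. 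This global finite-morphism device is precisely what replaces the local retraction arguments you were reaching for, and it is the idea your proposal is missing.
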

\begin{proof}
By a result of Sumihiro \cite[Theorem 1]{Sumihiro}, there is an $H$-equivariant embedding of $M$ into some projective space $\mathbb{P}(V)$, where $V$ is a finite-dimensional representation of $H$. As $H$ is reductive, $V$ can be decomposed as direct sums of finitely many irreducible representations of $H$.

By assumption, every closed $H$-orbit in $M$ corresponds to an $H$-stable line in $V$, which is a $1$-dimensional representation of $H$. Let $V'$ be the direct sum of all the $1$-dimensional representations and $V''$ the direct sum of remaining ones. Then we decompose $V$ as sum $H$-invariant spaces of $$V=V'\oplus V''.$$

The resulting projection $V \to V'$ is $H$-equivariant,
and hance gives an $H$-equivariant rational map
$$\Pi:\mathbb{P}(V) \dashrightarrow \mathbb{P}(V').$$
This rational map is defined at any closed $H$-orbit in $M$ by construction. Since the set
\begin{align}\label{A0201}
M\cap\mathbb{P}(V'')=\emptyset
\end{align}
so that
$$f:=\Pi|_X:X\to\mathbb{P}(V')$$
is a morphism.
Suppose that \eqref{A0201} is not true, then $M\cap\mathbb{P}(V'')$ is an $H$-stable closed projective variety. Thus the $H$-orbit in it of lowest dimension must be closed. By our assumption it is an $H$-fixed point $x_0$. This implies that $V''$ contains a $1$-dimensional $H$-invariant space, contradicts to the construction of $V''$.

By \eqref{A0201} we in fact conclude that $f$ is a finite morphism.
By definition, the derived subgroup $[H, H]$ acts trivially on $V'$.  Since $[H, H]$ is connected and $f$ is finite, we see that $[H, H]$ acts trivially on $M$. Since $H$ acts faithfully there, $[H, H]$ is trivial and $H$ is a torus.
\end{proof}

\begin{proof}[Proof of Proposition \ref{max-tor-unique}]
By our assumption, ${\rm Aut}^0(M)$ is a connected
reductive algebraic group. Thus, the centralizer $H$ of $T\times T$ in ${\rm Aut}^0(M)$ is a connected reductive algebraic group as well.

It suffices to show that $H$ is a torus. Consider the fixed point set $M^{T \times T}$ in $M$. It is known that $M^{T \times T}$ is finite and
contained in the union of the closed $G \times G$-orbits (cf. \cite[Theorem 2.7]{AB2}).
Since $H$ commutes with $T\times T$, we conclude that it acts on $M^{T \times T}$. However, $M^{T \times T}$ is finite and $H$ is connected, we conclude that
$$M^H=M^{T \times T}.$$

On the other hand, let $Y$ be any closed $H$-orbit in $M$. By Borel's fixed point theorem,  $Y$ contains a fixed point $y_0$ of the subtorus  $T \times T$.
Thus $y_0$ is fixed by $H$, and hence $$Y=\{y_0\}$$
is a single point. By Lemma \ref{char-tor}, $H$ is a torus as desired.

\end{proof}

\end{document}